\documentclass{article}

\usepackage[utf8]{inputenc}
\usepackage[english]{babel}
\usepackage{amsmath,amssymb,epsfig,amsthm}
\usepackage{graphicx}
\usepackage{comment}
\usepackage{array}
\usepackage{algorithm,algpseudocode}
\usepackage{enumerate}
\usepackage{algpseudocode}
\usepackage{xurl}
\usepackage{xargs}     
\usepackage{subcaption}
\usepackage{mwe}
\usepackage[pdftex,dvipsnames]{xcolor}  
\usepackage[colorinlistoftodos,prependcaption,textsize=tiny]{todonotes}
\usepackage{appendix}
\usepackage{textalpha}

\newcommandx{\unsure}[2][1=]{\todo[linecolor=red,backgroundcolor=red!25,bordercolor=red,#1]{#2}}
\newcommandx{\change}[2][1=]{\todo[linecolor=blue,backgroundcolor=blue!25,bordercolor=blue,#1]{#2}}
\newcommandx{\info}[2][1=]{\todo[linecolor=OliveGreen,backgroundcolor=OliveGreen!25,bordercolor=OliveGreen,#1]{#2}}
\newcommandx{\improvement}[2][1=]{\todo[linecolor=Plum,backgroundcolor=Plum!25,bordercolor=Plum,#1]{#2}}
\newcommandx{\thiswillnotshow}[2][1=]{\todo[disable,#1]{#2}}

\allowdisplaybreaks[1]

\newtheorem{theorem}{Theorem}

\newtheorem{assumption}{Assumption}

\newtheorem{condition}{Condition}

\newtheorem{corollary}{Corollary}

\newtheorem{definition}{Definition}

\newtheorem{lemma}{Lemma}

\newtheorem{remark}{Remark}

\numberwithin{equation}{section}

\DeclareMathOperator{\diag}{diag}

\DeclareMathOperator{\Var}{Var}


\newcommand{\calB}{\ensuremath{\mathcal{B}}}
\newcommand{\calC}{\ensuremath{\mathcal{C}}}

\newcommand{\calH}{\ensuremath{\mathcal{H}}}

\newcommand{\calG}{\ensuremath{\mathcal{G}}}

\newcommand{\calM}{\ensuremath{\mathcal{M}}}
\newcommand{\calN}{\ensuremath{\mathcal{N}}}

\newcommand{\calE}{\ensuremath{\mathcal{E}}}

\newcommand{\calL}{\ensuremath{\mathcal{L}}}


\newcommand{\norm}[1]{\left\|{#1}\right\|}
\newcommand{\abs}[1]{\left|{#1}\right|}

\newcommand{\expec}{\ensuremath{\mathbb{E}}}

\newcommand{\prob}{\ensuremath{\mathbb{P}}}

\definecolor{asparagus}{rgb}{0.53, 0.66, 0.42}


\newcommand{\indic}{\ensuremath{\mathbf{1}}} 


\newcommand{\R}{\ensuremath{\mathbb{R}}}

%

\newcommand{\spec}{\texttt{Spec}}
\newcommand{\specc}{\texttt{AdaSpec}}

\newcommand{\rev}[1]{\textcolor{black}{#1}}

\usepackage[round]{natbib}
\usepackage[margin=1in]{geometry} 
\bibliographystyle{abbrvnat}

\begin{document}

\title{Strong Consistency Guarantees for Clustering High-Dimensional Bipartite Graphs with the Spectral Method}
\author{Guillaume Braun\footnotemark[1]\\ \texttt{guillaume.braun@riken.jp} }
\date{\today}

\renewcommand{\thefootnote}{\fnsymbol{footnote}}
\footnotetext[1]{RIKEN Center for Advanced Intelligence Project }

\maketitle

\begin{abstract}
    In this work, we focus on the Bipartite Stochastic Block Model (BiSBM), a popular model for bipartite graphs with a community structure. We consider the high dimensional setting where the number $n_1$ of type I nodes is far smaller than the number $n_2$ of type II nodes. The recent work of \cite{braun2022minimax} established a sufficient and necessary condition on the sparsity level $p_{max}$ of the bipartite graph to be able to recover the latent partition of type I nodes. They proposed an iterative method that extends the one proposed by \cite{Ndaoud2021ImprovedCA} to achieve this goal. Their method requires a good enough initialization, usually obtained by a spectral method, but empirical results showed that the refinement algorithm doesn't improve much the performance of the spectral method. This suggests that the spectral achieves exact recovery in the same regime as the refinement method. We show that it is indeed the case by providing  new entrywise bounds on the eigenvectors of the similarity matrix used by the spectral method. Our analysis extend the framework of \cite{lihua20} that only applies to symmetric matrices with limited dependencies. As an important technical step, we also derive an improved concentration inequality for similarity matrices. 
\end{abstract}

\section{Introduction}
Bipartite graphs are a convenient way to represent the relationships between objects of two different types. One can find examples of applications in many fields such as e-commerce with customers and products \cite{customer-product}, finance with investors and assets \cite{financial-bipartite}, and biology with plants of pollinators networks \cite{plants-pollinators}. These networks are often large, and sparse. Moreover, the number of type I and type II nodes can be quite different. 

To extract relevant information from these networks one often relies on clustering methods. Amongst them, spectral clustering (SC) is one of the most popular approaches due to its efficiency in terms of computational complexity and statistical accuracy. However, the existing consistency guarantees for SC are often weak or require a sub-optimal sparsity level, and do not fully explain the performance of SC, as observed experimentally in \cite{braun2022minimax} and \cite{Ndaoud2021ImprovedCA}.

In this work, we fill this gap by showing that the SC achieves exact recovery under the BiSBM, an asymmetric extension of the Stochastic Block Model (SBM) commonly used to evaluate the performance of the clustering algorithm for bipartite graphs. Besides, we show that SC is optimal in the sense that it achieves exact recovery whenever $n_1n_2p_{max}^2\gtrsim \log n_1$, the optimal sparsity regime. We leave as future work the characterization of the precise constant necessary for exact recovery. 

\subsection{Main contributions} Our main contributions are summarized below.
\begin{itemize}
    \item We show that the spectral method achieves exact recovery of the rows partition whenever $n_1n_2p_{max}^2\gtrsim \log n_1$ and is hence optimal. To do that, we extend to similarity matrices the entrywise concentration bounds for eigenvectors obtained by \cite{lihua20} for matrices with independent entries, or limited dependencies. 
    \item Our analysis applies to rank deficient connectivity matrix. It allows for the partially remove of the `` spectral gap condition '' -- a common condition in the analysis of spectral methods that requires that the matrices of interest satisfy some rank condition to ensure that there is a spectral gap -- as in the recent work of \cite{loffler21,zhang2022leaveoneout}.
    \item Central to our proof is an improved concentration bound for similarity matrices. We derive this result by adapting the combinatorial argument of \cite{freige05} used to show the concentration of adjacency matrices sampled from the generalized Erdös-Renyi model.
\end{itemize}

\subsection{Related work}
\paragraph{Bipartite graphs and spectral clustering.} The recent work of \cite{braun2022minimax} confirmed the conjecture of \cite{Ndaoud2021ImprovedCA} that $n_1n_2p_{max}^2 \gtrsim \log n_1$ is a necessary and sufficient condition for exact recovery of the rows partition under the high-dimensional BiSBM where $n_1\ll n_2$. This threshold can be achieved by generalized power methods proposed in the aforementioned articles. However, existing strong consistency guarantees for SC requires stronger assumption. For example, when specialized to the setting of \cite{Ndaoud2021ImprovedCA} (a special case of our more general model), the result of \cite{sub_est21} holds only when the sparsity level satisfies $n_1n_2p_{max}^2\gtrsim \log^2 n_2$. When $n_1n_2p_{max}^2 \gtrsim \log n_1$, SC is only guaranteed to achieve weak consistency \cite{braun2022minimax}. The work of \cite{florescu16} also showed that when $n_1n_2p_{max}^2\gtrsim 1$, one can recover a proportion of the type I nodes labels by a SBM reduction, but this is the weakest existing recovery guarantee and we are focusing on exact recovery. The recent work of \cite{zhang2022leaveoneout} also proposed an improved analysis of the spectral method for asymmetric matrices with independent entries, but their bound becomes trivial in the high-dimensional regime $n_1\ll n_2$ we are interested in. 

\paragraph{Entrywise concentration bounds for eigenvectors.} In recent years, spectral algorithms have been shown to successfully achieve exact recovery in various community detection tasks under various settings such as, e.g., the SBM \cite{Abbe2020ENTRYWISEEA}, the Contextual SBM \cite{abbe2020ellp}, the Censored Block Model \cite{Dhara2022SpectralRO}, Hierarchical SBM \cite{Lei2020ConsistencyOS} and uniform Hypergraph SBM \cite{gaudio2022community}. Spectral methods have also been used in other estimation problems such as group synchronization \cite{tyagi21}, ranking \cite{SpecRank2019}, or planted subgraph detection \cite{dhara2022spectral}. To prove these results, one generally needs to obtain entrywise eigenvector concentration bounds. In this work, we will follow the framework developed by \cite{lihua20} that combines techniques used to obtain deterministic perturbation bounds \cite{Fan2016AnE, cape2019b,sun2020} with techniques that rely on some stochastic properties of the noise \cite{Abbe2020ENTRYWISEEA,cape2019a,eldridge18a}.  

\subsection{Notations}
We use lowercase letters ($\epsilon, a, b, \ldots$) to denote scalars and vectors, except for universal constants that will be denoted by $c_1, c_2, \ldots$ for lower bounds, and $C_1, C_2, \ldots $ for upper bounds and some random variables. We will sometimes use the notation $a_n\lesssim b_n$ (or $a_n\gtrsim b_n$ ) for sequences $(a_n)_{n \geq 1}$ and $(b_n)_{n \geq 1}$ if there is a constant $C>0$ such that $a_n \leq C b_n$ (resp. $a_n \geq C b_n$) for all $n$. If the inequalities only hold for $n$ large enough, we will use the notation $a_n=O(b_n)$ (resp. $a_n=\Omega(b_n)$).  If $a_n \lesssim b_n$ (resp. $a_n=O(b_n)$) and $a_n \gtrsim b_n$ (resp. $a_n=\Omega(b_n)$), then we write $a_n \asymp b_n$ (resp. $a_n=\Theta(b_n)$).  

Matrices will be denoted by uppercase letters. The $i$-th row of a matrix $A$ will be denoted as $A_{i:}$. The column $j$ of $A$ will be denoted by $A_{:j}$, and the $(i,j)$th entry by $A_{ij}$. The transpose of $A$ is denoted by $A^\top$ and $A_{:j}^\top$ corresponds to the $j$th row of $A^\top$ by convention. $I_k$ denotes the $k\times k$ identity matrix. For matrices, we use $|| . ||$ and $||.||_F$ respectively denote the spectral norm (or Euclidean norm in the case of vectors) and the Frobenius norm. 
\section{Model and algorithm description}
\subsection{The Bipartite Stochastic Block Model (BiSBM)}
\label{subsec:stat_frame}

The BiSBM is a direct adaption of the SBM \cite{HOLLAND1983109} to bipartite graphs.  The model depends on the following parameters.
\begin{itemize}
    \item A set of nodes of type I, $\calN_1 =[n_1]$, and a set of nodes of type II, $\calN_2 =[n_2]$.
    
    \item A partition of $\calN_1$ into $K$ communities $\calC_1,\ldots, \calC_K$ 
    and a partition of $\calN_2$ into $L$ communities $\calC_1',\ldots, \calC'_L$. 
    
    \item Membership matrices $Z_1 \in \calM_{n_1,K}$ and $Z_2 \in \calM_{n_2,L}$ where $\calM_{n,K}$ denotes the class of membership matrices with $n$ nodes and $K$ communities.  Each membership matrix $Z_1\in \calM_{n_1,K}$ (resp. $Z_2\in \calM_{n_2,L}$) can be associated bijectively with a partition function $z:[n]\to [K]$ (resp. $z':[n]\to [L]$) such that $z(i)=z_i=k$  where $k$ is the unique column index satisfying $(Z_1)_{ik}=1$ (resp. $(Z_2)_{ik}=1$ ).
    
    \item A connectivity matrix of probabilities between communities \[\Pi=(\pi_{k k'})_{k\in [K], k'\in [L]} \in [0,1]^{K \times L}.\]
\end{itemize}
Let us write \[P=(p_{ij})_{i,j \in [n] }:=Z_1\Pi (Z_2)^\top \in [0,1]^{n_1\times n_2}.\] A graph $\calG$ is distributed according to BiSBM$(Z_1, Z_2, \Pi)$ if the entries of the corresponding bipartite adjacency matrix $A$ are generated by 
\[ 
A_{ij}  \overset{\text{ind.}}{\sim} \mathcal{B}(p_{ij}), \quad i \in [n_1], \ j \in [n_2],
\] 
where $\calB(p)$ denotes a Bernoulli distribution with parameter $p$. Hence the probability that two nodes are connected depends only on their community memberships. The sparsity level of the graph is denoted by $p_{max} = \max_{i,j} p_{ij}$. We make the following assumptions on the model. 
\begin{assumption}[Approximately balanced communities]\label{ass:balanced_part}
The communities  $\calC_1,\ldots, \calC_K$, 
(resp. $\calC_1',\ldots, \calC'_L$) are approximately balanced, i.e., there exists a constant $\alpha \geq 1$ such that for all $k\in [K]$ and $l\in [L]$ we have 
\[ 
\frac{n_1}{\alpha K}\leq \abs{\calC_k} \leq \frac{\alpha n_1}{K} \text{ and } \frac{n_2}{\alpha L}\leq \abs{\calC'_l} \leq \frac{\alpha n_2}{L}.
\] 
\end{assumption}
We will consider throughout this work the parameters $\alpha, K$ and $L$ as constants. We won't keep track in the stated bounds of the dependencies in these parameters.

We will rely on the following assumption to ensure that the communities are well separated.
\begin{assumption}[Communities are well separated]\label{ass:eig_low_bd}
 Let $U\Lambda U^\top$ be the spectral decomposition of $PP^\top$. All the communities are well separated if the following assumptions are satisfied.\begin{enumerate}
     \item The smallest non zero eigenvalue of $\Pi\Pi^\top$, denoted by $\lambda_{min}(\Pi\Pi^\top)$, satisfies $\lambda_{min}(\Pi\Pi^\top) \gtrsim p_{\max}^2$.
     \item For all $i, j\in [n_1]$ such that $z_i\neq z_j$ we have $\norm{U_{i:}-U_{j:}}\geq \frac{c_1}{\sqrt{n}}$.
 \end{enumerate}  
\end{assumption}
\begin{remark}
This assumption doesn't require that $\Pi \Pi^\top$ is full rank contrary to classical assumptions used for analyzing spectral clustering. For example, consider the setting where $K=2=L$, the communities are exactly balanced and
\[ \Pi \Pi^\top =
\begin{pmatrix}
p & cp \\
cp & c^2p
\end{pmatrix}
\]
where $p$ is the sparsity parameter and $c>0$ is a constant. Observe that \[ PP^\top= \frac{n_2}{2}Z_1\Pi \Pi^\top Z_1^\top = \frac{n_1n_2}{4} W\Pi \Pi^\top W^\top \] where $W=\sqrt{\frac{2}{n_1}}$ has orthonormal columns. The SVD decomposition of $\Pi \Pi^\top$ is given by $cpVV^\top$ where $V=(\frac{c}{\sqrt{1+c^2}},\frac{1}{\sqrt{1+c^2}})^\top$. Hence, $U=WV$ and for $i\in \calC_1$ and $j\in \calC_2$ we have \[ \norm{U_{i:}-U_{j:}}\geq \frac{|1-c|}{\sqrt{n_1}}.\] 
\end{remark}

 The quality of the clustering is evaluated through the \textbf{misclustering rate} $r$ defined by 
 \begin{equation}
 \label{eq:def_misclust}   
 r(\hat{z},z)=\frac{1}{n}\min _{\pi \in \mathfrak{S}}\sum_{i\in [n]} \indic_{\lbrace \hat{z}(i)\neq \pi(z(i))\rbrace},\end{equation}
 where $\mathfrak{S}$ denotes the set of permutations on $[K]$. We say that an estimator $\hat{z}$ achieves \textbf{exact recovery} if $r(\hat{z},z)=0$ with probability $1-o(1)$ as $n$ tends to infinity. It achieves \textbf{weak consistency} (or almost full recovery) if $\prob(r(\hat{Z},Z)=o(1))=1-o(1)$ as $n$ tends to infinity.  A more complete overview of the different types of consistency and the sparsity regimes where they occur can be found in \cite{AbbeSBM}.

\subsection{Algorithm description}
\label{subsec:algo}
In the high-dimensional and sparse setting where $n_1\ll n_2$ and $n_1n_2p_{max}^2$ is of order $\log n_1$, there is no hope to recover the columns partition $Z_2$. So, it is natural to form the similarity matrix $AA^\top$ and compute the top-$K$ eigenspace of this similarity matrix. Unfortunately, the diagonal elements of $AA^\top$ create an important bias ($(AA^\top)_{ii}$ is typically of order $n_2p_{max}$ while the diagonal entries of corresponding population similarity matrix are of order $n_2p_{max}^2$). To avoid this issue, one can remove the diagonal of $AA^\top$ and obtain a matrix $B$. In this work, we consider a slightly different variant of the spectral methods proposed by \cite{braun2022minimax, Ndaoud2021ImprovedCA,florescu16}. See Algorithm \ref{alg:spec} for a complete description of the method. 

\begin{algorithm}[hbt!]
\caption{Spectral method on $\calH(AA^\top)$ (\spec)}\label{alg:spec}
\begin{flushleft}
        \textbf{Input:} The number of communities $K$, the rank $r$ of $\Pi \Pi^\top$ and the adjacency matrix $A$.

        \begin{algorithmic}[1]
        \State Form the diagonal hollowed Gram matrix $B:=\calH(AA^\top) $ where $\calH(X)=X-\diag(X)$.
       \State Compute the matrix $U\in \R^{n_1\times r}$ whose columns correspond to the top $r$-eigenvectors of $B$.
       \State Apply approximate $(1+2/e+\epsilon)$ approximate $\texttt{k-medians}$ on the rows of $U$ and obtain a partition $z^{(0)}$ of $[n_1]$ into $K$ communities.
       \end{algorithmic}
 \textbf{Output:} A partition of the nodes $z^{(0)}$.\end{flushleft}
\end{algorithm}

When the rank of $\Pi \Pi^\top$ is not known, we propose \specc\, (see Algorithm \ref{alg:spec2}), an adaptive version of Algorithm \ref{alg:spec}.

\begin{algorithm}[hbt!]
\caption{Adaptive spectral method on $\calH(AA^\top)$ (\specc)}\label{alg:spec2}
\begin{flushleft}
        \textbf{Input:} The number of communities $K$, a threshold $T>0$, and the adjacency matrix $A$.

        \begin{algorithmic}[1]
        \State Form the diagonal hollowed Gram matrix $B:=\calH(AA^\top) $ where $\calH(X)=X-\diag(X)$.
       \State Let $\hat{r}\in [K]$ be the largest index such that the difference between two consecutive eigenvalues are larger than some threshold $T$ \[\hat{r}:=\arg \max \lbrace r\in [K]:  \lambda_r(B)-\lambda_{r+1}(B)>T \rbrace .\]
       \State Compute the matrix $U\in \R^{n_1\times r}$ whose columns correspond to the top $r$-eigenvectors of $B$.
       \State Apply approximate $(1+2/e+\epsilon)$ approximate $\texttt{k-medians}$ on the rows of $U$ and obtain a partition $z^{(0)}$ of $[n_1]$ into $K$ communities.
       \end{algorithmic}
 \textbf{Output:} A partition of the nodes $z^{(0)}$.\end{flushleft}
\end{algorithm}

\paragraph{Computational complexity.} The cost for computing $B$ is $O(n_1\texttt{nnz}(A))$  
\renewcommand{\thefootnote}{\arabic{footnote}}
and for $U$ is\footnote{The $\log n_1$ term comes from the number of iterations needed when using the power method to compute the largest (or smallest) eigenvector of a given matrix. 
} $O(n_1^2K\log n_1)$.
 Applying the $(1+2/e+\epsilon)$ approximate $\texttt{k-medians}$ has a complexity $O(f(K,\epsilon)n_1^{O(1)})$ where $f(K,\epsilon)=(\epsilon^{-2}K\log K)^K$, see \cite{approx_kmedians}. Here we used (approximate) $\texttt{k-medians}$ because it can be linked easily with $\ell_{2\to \infty}$ perturbation bounds (see Lemma 5.1 in \cite{lihua20}). But we could also apply (approximate) $\texttt{k-means}$ as a rounding step and use results from \cite{LuSC2020}, Section 2.4 for the analysis. Depending on the rounding step used, the dependencies in some model parameters such as the number of communities $K$ can change. 
\section{Main results}
 First, we derive a new concentration bound for the similarity matrix $B$. It improves the upper-bound $\sqrt{n_1n_2p_{max}^2}\vee \log n_1 $ used in \cite{braun2022minimax} to $\sqrt{n_1n_2p_{max}^2}$ when $n_1n_2p_{max}^2\gtrsim \log n_1$. This improvement of a $\sqrt{\log n_1}$ factor is essential to show that \spec\, achieves exact recovery in the challenging parameter regime where $n_1n_2p_{max}^2$ is of order $\log n_1$.
 \begin{theorem}\label{thm:improved_conc} Let $B=\calH(AA^\top)$ where $A\sim BiSBM(n_1,n_2,K,L,\Pi)$ with $n_1n_2p_{max}^2\gtrsim \log n_1$ and $n_2\gtrsim n_1\log^2 n_1$. Then with probability at least $1-n_1^{-\Theta(1)}$ \[ \norm{B-\expec(B)}\lesssim \sqrt{n_1n_2p_{max}^2}.\]
\end{theorem}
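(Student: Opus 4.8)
The plan is to isolate the genuinely quadratic fluctuation of the Gram matrix and to attack it with the combinatorial technique of \cite{freige05}, which is what yields a sharp bound free of a spurious $\sqrt{\log n_1}$ factor. Writing $\Delta := A - P$ for the centered noise (so $\expec\Delta = 0$ and the entries $\Delta_{ij}$ are independent), and using that $\calH$ is linear and annihilates the diagonal matrix $\expec(\Delta\Delta^\top)=\diag(\sum_j p_{ij}(1-p_{ij}))$, I would first record the decomposition
\[ B - \expec(B) = \calH(\Delta\Delta^\top) + \calH(\Delta P^\top + P\Delta^\top). \]
The second (linear) term is the easy one: since $P=Z_1\Pi Z_2^\top$ has rank at most $\min(K,L)=O(1)$ and operator norm $\asymp \sqrt{n_1n_2}\,p_{max}$, while the rectangular noise obeys $\norm{\Delta}\lesssim \sqrt{n_2 p_{max}}$ (the classical Feige--Ofek bound for the bipartite adjacency matrix, valid because $n_2\gtrsim n_1\log^2 n_1$ forces the expected type~II--incident degrees well above $\log n_1$), one controls $\norm{\Delta P^\top}$ via the low-rank factorisation of $P$ and the fluctuation of the community-aggregated sums $\Delta Z_2$, the diagonal removed by $\calH$ being negligible. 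In the sparse high-dimensional regime of interest this term is of lower order than $\sqrt{n_1n_2p_{max}^2}$, so the whole difficulty is concentrated in the first term.

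For the quadratic term $W:=\calH(\Delta\Delta^\top)$ I would follow the Feige--Ofek scheme. After passing to a $\tfrac12$-net $\calT$ of the unit sphere of $\R^{n_1}$ with $|\calT|\le e^{Cn_1}$, it suffices to bound $\max_{x,y\in\calT} x^\top W y$. The key identity is
\[ x^\top W y = \sum_{j\in[n_2]} \sum_{i\neq k} x_i y_k\, \Delta_{ij}\Delta_{kj}, \]
so that the bilinear form is governed by weighted counts of common neighbours (``cherries'' $i-j-k$, a type~II node $j$ adjacent to two type~I nodes $i,k$). Following \cite{freige05} I would fix a threshold and split each such form into a \emph{light} part, where the coordinate weights $|x_i y_k|$ are small, and a \emph{heavy} part, where they are large. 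The light part is mean zero and, after truncation, is a sum of bounded independent contributions of total variance $\lesssim n_2 p_{max}^2$; a Bernstein inequality then yields failure probability $e^{-\Theta(n_1)}$ per net point, which beats the cardinality $e^{Cn_1}$ of $\calT$, and the hypothesis $n_1n_2p_{max}^2\gtrsim \log n_1$ is exactly what places the target deviation $\sqrt{n_1n_2p_{max}^2}$ in the Gaussian part of that bound. The heavy part is handled purely combinatorially: it is controlled by the number of cherries joining small subsets $S,S'\subseteq[n_1]$ through a subset $T\subseteq[n_2]$, and the task is to show that with probability $1-n_1^{-\Theta(1)}$ this count never exceeds a constant multiple of what a sharp discrepancy estimate permits — the analogue, for paths of length two, of the Feige--Ofek ``bounded discrepancy'' property of edge counts.

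The main obstacle is exactly this last combinatorial step. Two features make it harder than in \cite{freige05}: the relevant objects are length-two paths rather than edges, so the summands $\Delta_{ij}\Delta_{kj}$ are not independent (they are coupled through the shared endpoint $j$, and two entries of $W$ in the same row share the whole row $A_{i:}$), and the typical codegree $n_2 p_{max}^2$ of two type~I nodes can lie far below one, so the cherry hypergraph is extremely sparse and the counting must separate the bulk of pairs from the rare dense spots (vertices or pairs of atypically high degree or codegree). I expect the heart of the argument to be a codegree-concentration lemma together with a dyadic summation over the heavy couples showing that these dense spots contribute at most $O(\sqrt{n_1n_2p_{max}^2})$; this is precisely where the improved concentration inequality for similarity matrices advertised in the introduction resides. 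Once the light and heavy bounds are combined over $\calT$ and intersected with the event $\norm{\Delta}\lesssim\sqrt{n_2p_{max}}$ used for the linear term, the claimed bound $\norm{B-\expec(B)}\lesssim\sqrt{n_1n_2p_{max}^2}$ follows.
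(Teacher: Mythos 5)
Your proposal follows the same Feige--Ofek blueprint as the paper ($\epsilon$-net, light/heavy split of the weight pairs, discrepancy-type counting for the heavy pairs), but it has a genuine gap at the light-pair step, and that step is precisely where the paper's technical contribution (Lemma~\ref{lem:conc_prod_bin_gen}) lies. The summands $x_i y_k \Delta_{ij}\Delta_{kj}$ are not independent, so Bernstein cannot be applied to them as a ``sum of bounded independent contributions''; independence holds only across the type~II index $j$, i.e.\ for $\zeta_j=\sum_{i\neq k}x_iy_k\Delta_{ij}\Delta_{kj}$. The truncation level of $\zeta_j$ is then dictated by the column sums of $A$, and these are \emph{not} $O(1)$: a union bound over the $n_2\gg n_1$ columns can only afford an event of the form $\max_l \sum_i A_{il}\lesssim \sqrt{\log n_1}$ (the paper's event $\calE$), so on light pairs one only gets $|\zeta_j|\lesssim \sqrt{n_2/n_1}\,p_{max}\log n_1$. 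Bernstein with total variance $\asymp n_2p_{max}^2$ and this truncation gives, at the target deviation $t=\sqrt{n_1n_2}\,p_{max}$, the exponent $\min\{t^2/(n_2p_{max}^2),\; t/(\sqrt{n_2/n_1}\,p_{max}\log n_1)\}\asymp n_1/\log n_1$, which \emph{loses} against the $e^{\Theta(n_1)}$ cardinality of the net. So your assertion that the hypothesis $n_1n_2p_{max}^2\gtrsim \log n_1$ ``places the target deviation in the Gaussian part'' is exactly the claim that fails; a naive truncate-and-Bernstein argument only recovers the old bound with its spurious $\sqrt{\log n_1}$ factor.

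The paper closes this hole with two ingredients absent from your proposal: a decoupling step (independent Bernoulli signs splitting the rows into $\Lambda_\delta$ and $\Lambda_\delta^c$, so that conditionally the form becomes linear in independent Bernoulli variables), and a stratification of the columns into $L_1$ (sums bounded by a constant $M$) and $L_2$ (sums between $M$ and $C\sqrt{\log n_1}$), together with the bound $|L_2|\lesssim n_2e^{-C\sqrt{\log n_1}}$ holding with probability $1-e^{-\Omega(n_2)}$; the conditional m.g.f.\ is then controlled separately on $L_1$ (benign truncation) and on $L_2$ (few columns, negligible total contribution). This is the mechanism that removes the logarithmic factor, and it also shows you have misplaced the ``heart of the argument'': in the paper the heavy pairs are handled by a comparatively routine Lei--Rinaldo-style union bound, and even that rests on the same m.g.f.\ machinery through Lemma~\ref{lem:conc_prod_bin_gen2} and the row-sum bound of Lemma~\ref{lem:bounded_degree}. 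Two smaller points. First, the discrepancy property requires nonnegative entries, so the counting must be run on $B_{ij}=\langle A_{i:},A_{j:}\rangle$ itself, not on the signed matrix $\calH(\Delta\Delta^\top)$; this is why the paper never centers at the matrix level, subtracting $\expec B$ only inside the scalar forms and bounding the heavy part of $\expec B$ deterministically (term (E2)). Your decomposition would force you to re-expand $\Delta\Delta^\top$ into uncentered pieces at this stage anyway. Second, your cross-term bound needs $n_1p_{max}=O(1)$, which holds in the critical regime but does not follow from $n_1n_2p_{max}^2\gtrsim\log n_1$ alone and should be stated.
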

\begin{remark}By using this concentration inequality, one could improve the conditions of applicability of Proposition 1. and Theorem 2. For example, Proposition 1 requires that $n_1n_2p_{max}^2\geq C\log n_1$ for a constant $C>0$ large enough. But by using the concentration inequality of Theorem \ref{thm:improved_conc}, we would only require $n_1n_2p_{max}^2\geq c\log n_1$ for an arbitrary constant $c>0$. See also Remark 8 in \cite{braun2022minimax}.
\end{remark}

Finally, we show that \spec\, achieves exact recovery by proving the following $\ell_{2\to \infty}$ concentration bound for the top$-r$ eigenspace $U$ of $B$. Let us denote the $\ell_{2\to \infty}$ between two matrices of eigenvectors $U$ and $U^*\in \R^{n\times K}$ by\[ d_{2\to \infty}(U,U^*)= \inf_{O\in \R^{n_1\times r}, O^\top O=I}\norm{UO- U^*}.\]
%
\begin{theorem}\label{thm:main} Assume that $A\sim BiSBM(n_1,n_2,K,L,\Pi)$ with $n_2\gtrsim n_1\log^2 n_1$, $n_1n_2p_{max}^2\geq C \log n_1$ for a large enough constant $C>0$, and $n_2p_{max}^2=o(1)$. Let $U\Lambda U^\top$ (resp. $U^*\Lambda^*U^{*\top}$) be the spectral decomposition of $B=\calH(AA^\top)$ (resp. $B^*=PP^\top$). Then there exists a constant $c>0$ (that can be made arbitrarily small if $C$ is chosen large enough) such that with probability at least $1-n^{-\Theta(1)}$ \[ d_{2\to \infty}(U, U^*) \leq \frac{c}{\sqrt{n_1}}.\]
\end{theorem}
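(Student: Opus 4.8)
The plan is to realize Theorem \ref{thm:main} as an instance of the abstract $\ell_{2\to\infty}$ eigenspace perturbation machinery of \cite{lihua20}, after extending its ``row concentration'' hypothesis so that it accommodates the dependency structure of the similarity matrix $B=\calH(AA^\top)$. Writing $E=B-B^*$ with $B^*=PP^\top$ and $\lambda^*:=\lambda_r(B^*)$ for the smallest nonzero population eigenvalue, that framework reduces the desired bound to four ingredients: (i) a lower bound on $\lambda^*$; (ii) an incoherence (delocalization) bound on $U^*$; (iii) a spectral-norm bound on $\norm{E}$; and (iv) a uniform row-wise concentration bound $\max_i\norm{E_{i:}W}\lesssim(\text{small})$ for test matrices $W$ in a neighbourhood of $U^*$. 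The rank deficiency of $\Pi\Pi^\top$ is harmless here because $\lambda_{r+1}(B^*)=0$, so the relevant eigengap is simply $\lambda^*$ itself; this is the ``partial removal of the spectral gap condition'' alluded to in the introduction.

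First I would dispatch the three routine ingredients. For (i), since $B^*=Z_1\Pi Z_2^\top Z_2\Pi^\top Z_1^\top$ and the communities are approximately balanced (Assumption \ref{ass:balanced_part}), one has $Z_2^\top Z_2\asymp(n_2/L)I_L$ and $Z_1^\top Z_1\asymp(n_1/K)I_K$, whence $\lambda^*\gtrsim n_1 n_2\,\lambda_{min}(\Pi\Pi^\top)\gtrsim n_1 n_2 p_{max}^2$ by Assumption \ref{ass:eig_low_bd}. For (ii), the rows of $U^*$ are constant on each of the $K=O(1)$ communities, so $\norm{U^*}_{2\to\infty}\lesssim 1/\sqrt{n_1}$. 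For (iii), Theorem \ref{thm:improved_conc} gives $\norm{B-\expec(B)}\lesssim\sqrt{n_1 n_2 p_{max}^2}$, and since $\expec(B)=\calH(PP^\top)$ differs from $B^*=PP^\top$ only on the diagonal, with $\norm{\diag(PP^\top)}=\max_i\sum_l p_{il}^2\lesssim n_2 p_{max}^2=o(1)$, we obtain $\norm{E}\lesssim\sqrt{n_1 n_2 p_{max}^2}$. Combining (i) and (iii) yields the crucial smallness $\norm{E}/\lambda^*\lesssim(n_1 n_2 p_{max}^2)^{-1/2}\leq(C\log n_1)^{-1/2}$, which lets the self-bounding term be absorbed and can be made arbitrarily small by enlarging $C$.

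The heart of the argument, and the main obstacle, is ingredient (iv): unlike the independent-entry setting of \cite{lihua20}, each entry $E_{ij}=\sum_l(A_{il}A_{jl}-p_{il}p_{jl})$ couples row $i$ of $A$ with every other row, so a row of $E$ is not a sum of independent contributions. My plan is to split, for a test matrix $W$,
\[
(EW)_{i:}=\underbrace{\sum_{l}(A_{il}-p_{il})\sum_{j\neq i}A_{jl}W_{j:}}_{\text{(a)}}+\underbrace{\sum_{l}p_{il}\sum_{j\neq i}(A_{jl}-p_{jl})W_{j:}}_{\text{(b)}}+\,(\text{diagonal}),
\]
and treat the two pieces by different routes. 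For (a), I would introduce the leave-one-out similarity matrix $B^{(i)}$ obtained by neutralising row and column $i$; since $B^{(i)}_{jk}$ depends only on rows other than $i$, its eigenvectors $U^{(i)}$ are independent of row $i$, so taking $W=U^{(i)}$ and conditioning on the remaining rows makes (a) a sum of independent terms amenable to a vector Bernstein bound, with conditional variance $\lesssim p_{max}\norm{A}^2\lesssim n_1 n_2 p_{max}^3+n_2 p_{max}^2$ and hence leading contribution $o(\lambda^*/\sqrt{n_1})$ once $n_1 n_2 p_{max}^2\geq C\log n_1$. For (b) the weights $p_{il}$ are deterministic; after replacing $U^{(i)}$ by $U^*$ up to a lower-order operator-norm correction, (b) becomes linear in $\{A_{jl}\}_{j\neq i}$ with variance $\lesssim p_{max}\norm{p_{i:}}^2\lesssim n_2 p_{max}^3$, giving $\norm{\text{(b)}}\lesssim\sqrt{n_2 p_{max}^3\log n_1}=o(\lambda^*/\sqrt{n_1})$ because $n_2 p_{max}\gtrsim\log^{3/2}n_1$ under our hypotheses. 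The genuinely delicate point is that the naive Bernstein ranges carry extra logarithmic factors through the maximal column degree of $A$, which are not obviously subdominant in the critical regime $n_1 n_2 p_{max}^2\asymp\log n_1$; controlling them is where $n_2\gtrsim n_1\log^2 n_1$ and $n_2 p_{max}^2=o(1)$ do real work, and, as for Theorem \ref{thm:improved_conc}, a sharper moment/combinatorial count in the spirit of \cite{freige05} may be needed in place of a crude union bound.

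Finally I would assemble the pieces. The leave-one-out analysis also shows $U^{(i)}$ is $\ell_2$-close to $U$, making it an admissible test matrix, so the master inequality of \cite{lihua20} reads schematically
\[
d_{2\to \infty}(U,U^*)\lesssim\frac{\norm{E}\,\norm{U^*}_{2\to\infty}}{\lambda^*}+\frac{\max_i\norm{E_{i:}U^{(i)}}}{\lambda^*}+\frac{\norm{E}}{\lambda^*}\,d_{2\to \infty}(U,U^*).
\]
Because $\norm{E}/\lambda^*=o(1)$ the last term is absorbed into the left-hand side, while the first two are each $\lesssim c/\sqrt{n_1}$ by the estimates above, with $c$ shrinking as $C$ grows; this yields $d_{2\to \infty}(U,U^*)\leq c/\sqrt{n_1}$ and completes the proof.
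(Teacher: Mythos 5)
Your overall architecture coincides with the paper's: you replace the total-variation decoupling hypothesis (A1) of \cite{lihua20} by a genuine leave-one-out construction (eigenvectors $U^{(i)}$ of the matrix with row and column $i$ removed, hence independent of row $i$ of $A$), you use the same decomposition of $E=B-B^*$ into a quadratic part, a linear part and a diagonal part, you invoke Theorem \ref{thm:improved_conc} for $\norm{E}$, and your ingredients (i)--(iii) and final assembly are exactly the paper's Lemma \ref{lem:lem_fond}, Conditions \ref{cond:c1}--\ref{cond:c4} and Theorem \ref{thm:lihua_ext}. So this is not a different route; the question is whether you close the one step where the paper's real work lies.

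You do not, and the gap is precisely at your ingredient (iv). Writing $R=A-P$, conditionally on $A_{-i}$ the quadratic term is $\tilde{E}_{i:}w^{(i)}=\sum_l R_{il}v_l$ with weights $v_l=\sum_{j\neq i}R_{jl}w^{(i)}_j$, and the only worst-case bound available (even on the bounded-column-degree event $\calE$) is $\norm{v}_\infty\lesssim\sqrt{\log n_1}\,\norm{w^{(i)}}_\infty$; since $w^{(i)}$ is adapted to $A_{-i}$, you cannot argue that the specific combination $v_l$ is typically smaller. A Bernstein-type bound at confidence level $n_1^{-c}$ therefore produces a range term of order $\sqrt{\log n_1}\,\norm{w^{(i)}}_\infty\log n_1$, and after dividing by $\Delta^*\asymp n_1n_2p_{max}^2\geq C\log n_1$ this leaves a contribution of order $\sqrt{\log n_1}/(C\sqrt{n_1})$, which exceeds $c/\sqrt{n_1}$ for every constant $c$ exactly in the critical regime $n_1n_2p_{max}^2\asymp\log n_1$. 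You acknowledge this and defer to ``a sharper moment/combinatorial count,'' but that deferral is the theorem: without it the last paragraph's claim that each term is $\lesssim c/\sqrt{n_1}$ is unsupported. The paper's resolution (Lemma \ref{lem:lem_fond}, item 4, and Lemma \ref{lem:4}) is concrete and different from Bernstein: it applies the Bennett-type inequality of \cite{lihua20} (their Lemma F.3), whose deviation is $f(\delta)\bigl(\norm{\tilde{w}}_\infty+\sqrt{\sum_{j,l}\tilde{w}_{jl}^2p_{jl}}\bigr)$ with $f(\delta)\asymp\log n_1/\log\log n_1$, to the bilinear form with \emph{pair-indexed} weights $\tilde{w}_{jl}=R_{jl}w^{(i)}_j$, so the sup-norm coefficient is $\norm{\tilde{w}}_\infty\leq\norm{w^{(i)}}_\infty$ with no column-degree inflation, while the $\ell_2$ part is controlled by the Fact $\max_j\sum_l R_{jl}^2\lesssim n_2p_{max}$. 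This yields $b_\infty(\delta)\lesssim\log n_1/\log\log n_1$ and $b_2(\delta)\lesssim\sqrt{n_2p_{max}^2}\,\log n_1/\log\log n_1$, and the extra $1/\log\log n_1$ factor is exactly what makes $\sigma(\delta)/\Delta^*=O(1/(C\log\log n_1))$, hence $d_{2\to\infty}(U,U^*)\leq c/\sqrt{n_1}$ with $c$ shrinking as $C$ grows. Until you supply a row-concentration bound of this strength, the proof is incomplete at its central step.
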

\begin{corollary}\label{cor:1} Under the same assumption as in Theorem \ref{thm:main} \spec\, achieves exact recovery with probability at least $1-n^{-\Theta(1)}$.  
\end{corollary}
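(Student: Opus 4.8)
The plan is to deduce the corollary from Theorem \ref{thm:main} by analysing the final \texttt{k-medians} rounding step, following the standard route from an $\ell_{2\to\infty}$ eigenvector bound to exact recovery. First I would record the cluster structure of the population embedding. Since $P=Z_1\Pi Z_2^\top$, we have $PP^\top = Z_1 M Z_1^\top$ with $M=\Pi Z_2^\top Z_2\Pi^\top\in\R^{K\times K}$, so the range of $B^*=PP^\top$ is contained in the range of $Z_1$. Consequently every eigenvector with nonzero eigenvalue is constant on each community, and the rows of the top-$r$ matrix $U^*$ take only $K$ distinct values $\mu_1^*,\dots,\mu_K^*\in\R^r$, with $U^*_{i:}=\mu^*_{z_i}$. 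Part~2 of Assumption~\ref{ass:eig_low_bd} then guarantees that these centers are well separated, $\norm{\mu_k^*-\mu_l^*}\geq c_1/\sqrt{n_1}$ for $k\neq l$.

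Next I would translate Theorem \ref{thm:main} into a per-row statement. With probability $1-n^{-\Theta(1)}$ the theorem furnishes an orthogonal $O$ with $\max_i\norm{(UO)_{i:}-U^*_{i:}}\leq c/\sqrt{n_1}$, so every empirical row $(UO)_{i:}$ lies within distance $\delta:=c/\sqrt{n_1}$ of its population center $\mu^*_{z_i}$. Crucially, $c$ can be driven below any prescribed fraction of $c_1$ by enlarging the constant $C$ in the sparsity hypothesis, so I may assume $\delta$ is an arbitrarily small multiple of the separation $\Delta:=c_1/\sqrt{n_1}$.

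The third step is the rounding analysis. Because a global orthogonal $O$ preserves all pairwise distances, running approximate \texttt{k-medians} on the rows of $U$ is equivalent to running it on the rows of $UO$, which are concentrated in $K$ balls of radius $\delta$ about centers of mutual distance $\geq\Delta\gg\delta$. Placing centers at the $\mu_k^*$ shows the optimal cost is at most $n_1\delta$, so the $(1+2/e+\epsilon)$-approximation guarantee (Lemma~5.1 of \cite{lihua20}) bounds the output cost by $\lesssim n_1\delta$. From this one extracts a bijection $\pi$ between estimated and true clusters whose centers satisfy $\norm{\hat c_{\pi(k)}-\mu_k^*}\lesssim\delta$. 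Then for any node $i$ with $z_i=k$ and any $l\neq k$,
\[ \norm{(UO)_{i:}-\hat c_{\pi(k)}}\lesssim\delta \quad\text{while}\quad \norm{(UO)_{i:}-\hat c_{\pi(l)}}\geq \Delta-O(\delta), \]
so once $\delta$ is a small enough fraction of $\Delta$ every node is assigned to the center matching its own community and the misclustering rate $r(\hat z,z)$ vanishes.

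The main obstacle is the third step: approximate \texttt{k-medians} need not place its centers at the empirical cluster means, so I must argue that the constant-factor cost guarantee still forces each returned center to be within $O(\delta)$ of a distinct population center. This is where the separation $\Delta\gg\delta$ and the balancedness of Assumption~\ref{ass:balanced_part} (each cluster has size $\asymp n_1/K$, so a center far from all true centers would incur cost $\Omega(n_1\Delta/K)\gg n_1\delta$) are essential; ruling out degenerate center configurations and propagating the $\ell_{2\to\infty}$ control through the approximation factor is the delicate part, whereas the probability $1-n^{-\Theta(1)}$ is simply inherited from Theorem~\ref{thm:main}.
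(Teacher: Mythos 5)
Your proposal is correct and takes essentially the same route as the paper: the paper combines Theorem \ref{thm:main}'s bound $d_{2\to\infty}(U,U^*)\leq c/\sqrt{n_1}$ (with $c$ made small relative to $c_1$ by enlarging the constant $C$) with the row separation $c_1/\sqrt{n_1}$ from Assumption \ref{ass:eig_low_bd}, and invokes Lemma 5.1 of \cite{lihua20} (restated as Lemma \ref{lem:k-medians}) to conclude exact recovery whenever $c< c_1/(6\alpha)$. Your third step is just an inline re-derivation of that cited lemma (optimal-cost bound, center--cluster bijection via the balancedness of Assumption \ref{ass:balanced_part}, and nearest-center assignment), so the substance coincides with the paper's argument.
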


\begin{corollary}\label{cor:2} Under the same assumptions of Theorem \ref{thm:main} with the choice $T=n_1n_2p_{max}^2/\log \log n_1$, \specc\, achieves exact recovery with probability at least $1-n^{-\Theta(1)}$.  
\end{corollary}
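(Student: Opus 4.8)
The plan is to reduce the corollary to a single statement: that with the stated threshold $T = n_1 n_2 p_{max}^2/\log\log n_1$, the rank estimate $\hat r$ computed in step~2 of \specc\ equals the true rank $r = \rank(\Pi\Pi^\top)$ with probability $1-n^{-\Theta(1)}$. On that event, \specc\ extracts the top-$r$ eigenspace and runs exactly the same $\texttt{k-medians}$ rounding as \spec, so it outputs an identical partition, and exact recovery follows immediately from Corollary~\ref{cor:1}. Everything therefore hinges on locating the eigenvalue gaps of $B=\calH(AA^\top)$ relative to $T$.

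To do so I would first pin down the spectrum of $B$. Writing $\expec(B)=\calH(PP^\top)=B^*-\diag(PP^\top)$, the diagonal correction obeys $\norm{\diag(PP^\top)}\le n_2 p_{max}^2 = o(1)$ by the hypothesis of Theorem~\ref{thm:main}, while Theorem~\ref{thm:improved_conc} gives $\norm{B-\expec(B)}\lesssim \sqrt{n_1 n_2 p_{max}^2}$ on an event of probability $1-n_1^{-\Theta(1)}$. Since the noise scale $\sqrt{n_1 n_2 p_{max}^2}\to\infty$ dominates the $o(1)$ diagonal term, these combine to $\norm{B-B^*}\lesssim \sqrt{n_1 n_2 p_{max}^2}$, and Weyl's inequality then places every eigenvalue of $B$ within $O(\sqrt{n_1 n_2 p_{max}^2})$ of the corresponding eigenvalue of $B^*$. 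The quantitative input is that all $r$ nonzero eigenvalues of $B^*=PP^\top$ are of order $n_1 n_2 p_{max}^2$: the lower bound $\lambda_r(B^*)\gtrsim n_1 n_2 p_{max}^2$ comes from Assumption~\ref{ass:eig_low_bd} (which gives $\lambda_{min}(\Pi\Pi^\top)\gtrsim p_{max}^2$) together with the balancedness of Assumption~\ref{ass:balanced_part}, which supplies the $n_1 n_2$ factor through $Z_1^\top Z_1$ and $Z_2^\top Z_2$ having diagonal entries of order $n_1$ and $n_2$ respectively. Hence, on the good event, $\lambda_i(B)\gtrsim n_1 n_2 p_{max}^2$ for $i\le r$ and $\abs{\lambda_i(B)}\lesssim \sqrt{n_1 n_2 p_{max}^2}$ for $i>r$.

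With the spectrum split into a signal band at scale $n_1 n_2 p_{max}^2$ and a noise band at scale $\sqrt{n_1 n_2 p_{max}^2}$, the threshold $T$ is designed to slip between them, namely $\sqrt{n_1 n_2 p_{max}^2}\ll T\ll n_1 n_2 p_{max}^2$. The gap at the true rank satisfies $\lambda_r(B)-\lambda_{r+1}(B)\gtrsim n_1 n_2 p_{max}^2\gg T$, so $r$ lies in the selection set and $\hat r\ge r$. Conversely, for every $i$ with $r<i\le K$ we have $\lambda_i(B)-\lambda_{i+1}(B)\le \abs{\lambda_i(B)}+\abs{\lambda_{i+1}(B)}\lesssim \sqrt{n_1 n_2 p_{max}^2}$, which is strictly below $T$ because $\log\log n_1\ll \sqrt{n_1 n_2 p_{max}^2}$ under $n_1 n_2 p_{max}^2\ge C\log n_1$; thus no index beyond $r$ survives and $\hat r\le r$. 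Together these give $\hat r=r$ on the good event, completing the reduction to Corollary~\ref{cor:1}.

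The main delicate point is the scale bookkeeping in the middle step: I must verify that \emph{all} $r$ signal eigenvalues of $PP^\top$, not merely the largest, are bounded below by a constant multiple of $n_1 n_2 p_{max}^2$ (this lower bound is already required for the spectral gap in Theorem~\ref{thm:main}, so it can be reused), and that the hollowing correction $\diag(PP^\top)$ together with the possibly negative lower eigenvalues of $B$ do not disturb the clean separation of bands. The choice $T=n_1 n_2 p_{max}^2/\log\log n_1$ is essentially forced: the $\log\log n_1$ factor yields a quantity that exceeds the noise-band gaps yet stays well below the signal gap, so that $\hat r$ correctly identifies the rank in exactly the regime $n_1 n_2 p_{max}^2\gtrsim \log n_1$ where the rest of the analysis operates.
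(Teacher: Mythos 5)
Your proposal is correct and follows essentially the same route as the paper, whose entire proof is the observation that it suffices to show $\hat{r}=r$ w.h.p., which follows from Weyl's inequality and the bound $\norm{B-B^*}\lesssim \sqrt{n_1n_2p_{max}^2}$. You simply fill in the details the paper leaves implicit: the signal eigenvalues sit at scale $n_1n_2p_{max}^2$, the noise eigenvalues at scale $\sqrt{n_1n_2p_{max}^2}$, and $T=n_1n_2p_{max}^2/\log\log n_1$ separates the two bands precisely because $\log\log n_1 \ll \sqrt{\log n_1} \lesssim \sqrt{n_1n_2p_{max}^2}$, after which the conclusion reduces to Corollary \ref{cor:1}.
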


\section{Proof of Theorem \ref{thm:improved_conc}}
%
%
The proof strategy is based on the combinatorial argument developed by \cite{freige05}.

Let us denote  \[\calE =\left\lbrace  \max_{l\in [n_2]} \sum_{i\in [n_1]} A_{il}\leq C \sqrt{\log n_1}\right\rbrace .\] By Chernoff bound and a union bound \[ \prob(\calE^c)\leq n_2e^{-0.5C^2\frac{\log n_1}{n_1p_{max}}} \leq e^{\log n_2-0.5C^2\log(n_1)\sqrt{\frac{n_2}{n_1\log n_1}}}\leq e^{-\Omega(\log (n_1)}.\] By choosing $C$ large enough, we can ensure that $\calE$ occurs with probability at least $1-n_1^{-3}$. From now, we will condition on this event. 

\paragraph{Step 1.}
A standard $\epsilon-$net argument with the Euclidean norm (see e.g. Lemma B.1 and B.2 in \cite{RobHyp2020}) shows that for all $0<\epsilon<1/2$ there exists a $\epsilon-$net $\calN$ of $\mathbb{S}^ {n-1}$ such that $|\calN|\leq (1+\frac{2}{\epsilon})^n$ and \[ \norm{B-\expec(B)} \leq \frac{1}{1-2\epsilon} \sup_{x\in \calN} \left|x^\top (B-\expec(B))x\right|. \] In the following, we will fix $\epsilon = 1/4$.

\paragraph{Step 2.} In order to bound the previous quantity, let us introduce for all $x\in \mathbb{S}^{n_1-1}$ the set of ``light pairs'' \[ \calL(x) =\lbrace (i,j)\in [n_1]\times [n_1] : |x_ix_j|\leq \sqrt{\frac{n_2}{n_1}}p_{max} \rbrace\] and the set of ``heavy pairs" \[ \calH (x)= [n_1]\times [n_1] \setminus \calL(x).\] When clear from the context, we will omit the dependency in $x$ in the notations of the previous sets.

We have \[ \sup_{x\in \calN} \left|x^\top (B-\expec(B))x\right| \leq  \sup_{x\in \calN}\underbrace{\left|\sum_{(i,j)\in \calL}x_ix_jB_{ij}-x^\top\expec Bx\right|}_{(T1)} +\underbrace{\sup_{x\in \calN}\left|\sum_{(i,j)\in \calH}x_ix_jB_{ij}\right|}_{(T2)}.\]

\paragraph{Step 3.} We are going to bound (T1) w.h.p. Observe that \[ (T1) \leq \underbrace{\left|\sum_{(i,j)\in \calL}x_ix_j(B_{ij}-\expec B_{ij})\right|}_{(E1)}+\underbrace{\left|\sum_{(i,j)\in \calH}x_ix_j\expec B_{ij}\right|}_{(E2)}.\] It is easy to bound the deterministic quantity (E2)\begin{align*}
    (E2) &\leq \sum_{(i,j)\in \calH}\expec B_{ij}\frac{(x_ix_j)^2}{|x_ix_j|}\\
    &\leq \sqrt{\frac{n_1}{n_2}}p_{max}^{-1}n_2p_{max}^2\sum_{(i,j)\in \calL}(x_ix_j)^2\\
    &\leq \sqrt{n_1n_2}p_{max}\sum_{i\in [n_1]}x_i^2\sum_{j\in [n_1]}x_j^2\\
    &=\sqrt{n_1n_2}p_{max}.
\end{align*}

\rev{The upper-bound of (E1) conditioned on $\calE$ follows from Lemma \ref{lem:conc_prod_bin_gen} that gives \[ (E1) \leq C_1  \sqrt{n_1n_2}p_{max} \] with probability at least $1-e^{-11n_1}$ for some constant $C_1>1$. Since $|\calN|\leq e^{9n_1}$ according to Step 1, we obtain by a union bound argument that \begin{align*}
    \prob &\left(\calE \cap \sup_{x\in \calN} \left|\sum_{(i,j)\in \calL}x_ix_jB_{ij}-x^\top\expec Bx\right|>2C_1\sqrt{n_1n_2}p_{max} \right)  \\ 
    &\leq \prob\left(\calE \cap \sup_{x\in \calN}\left|\sum_{(i,j)\in \calL}x_ix_j(B_{ij}-\expec B_{ij})\right|>C_1\sqrt{n_1n_2}p_{max}\right) \\
    &\leq |\calN|e^{-11n_1} \leq e^{-2n_1}.
\end{align*} 
}

\paragraph{Step 3.} We are now going to bound the term involving the heavy pairs $(T2)$. First, one needs to control the sum of the entries of each row and column of $B$.
\begin{lemma}\label{lem:bounded_degree} There exists a constant $C_2>0$ such that with probability at least $1-e^{\Theta(n_1n_2p_{max}^2)}$ \[ \max_{i\in [n_1]}\sum_{j\in [n_1]} B_{ij} \leq C_2n_2n_1p_{max}^2. \]
\end{lemma}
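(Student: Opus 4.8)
The plan is to fix a single row $i$, rewrite $\sum_{j}B_{ij}$ in a form that exposes a conditional independence structure, and then apply two nested Chernoff bounds followed by a union bound over $i\in[n_1]$. Since $\calH$ removes the diagonal and $A_{il}^2=A_{il}$,
\[
\sum_{j\in[n_1]}B_{ij}=\sum_{j\ne i}(AA^\top)_{ij}=\sum_{l\in[n_2]}A_{il}\Big(\sum_{j\ne i}A_{jl}\Big).
\]
Its expectation equals $\sum_{l}p_{il}\big(\sum_{j\ne i}p_{jl}\big)\le n_1n_2p_{max}^2$, so the target $C_2n_1n_2p_{max}^2$ is a constant multiple of the mean and the lemma is a concentration statement. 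The obstacle is that the summands are dependent: all terms coming from a fixed column $l$ share the common factor $A_{il}$, so this is not a sum of independent variables.

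First I would condition on the $i$-th row $A_{i:}$, equivalently on the random set $S_i:=\{l\in[n_2]:A_{il}=1\}$. Conditionally on $S_i$ the row sum equals $\sum_{l\in S_i}\sum_{j\ne i}A_{jl}$, which involves the pairwise distinct entries $\{A_{jl}:l\in S_i,\ j\ne i\}$ and is therefore a genuine sum of independent Bernoulli variables, with conditional mean at most $|S_i|\,n_1p_{max}$. This reduces the estimate to two independent Chernoff bounds.

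Stage one controls the number of active columns: $|S_i|=\sum_lA_{il}$ has mean at most $n_2p_{max}$, so a multiplicative Chernoff bound gives $|S_i|\le 2n_2p_{max}$ with failure probability at most $e^{-\Theta(n_2p_{max})}$. Stage two works on the event $\{|S_i|\le 2n_2p_{max}\}$, where the conditional mean is at most $2n_1n_2p_{max}^2$; a second Chernoff bound, in the form $\prob(X\ge a)\le(e\,\expec X/a)^a$, yields $\sum_jB_{ij}\le C_2n_1n_2p_{max}^2$ with failure probability at most $e^{-\Theta(n_1n_2p_{max}^2)}$, the hidden constant growing with $C_2$. A union bound over the $n_1$ rows then gives the lemma: it is absorbed because the hypotheses $n_1n_2p_{max}^2\gtrsim\log n_1$ and $n_2\gtrsim n_1\log^2 n_1$ force $n_2p_{max}\gtrsim\log^{3/2}n_1$ as well as $n_1n_2p_{max}^2\gtrsim\log n_1$, each of which dominates $\log n_1$. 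Note that this argument does not use the event $\calE$.

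The step I expect to require the most care is matching the stated rate. The two stages contribute failure probabilities at the two different scales $e^{-\Theta(n_2p_{max})}$ and $e^{-\Theta(n_1n_2p_{max}^2)}$, so the true exponent is of order $\min(n_2p_{max},\,n_1n_2p_{max}^2)$. In the sparse regime of interest, where $n_1p_{max}\lesssim 1$, one has $n_1n_2p_{max}^2\le n_2p_{max}$ and the second scale is the binding one, reproducing exactly the probability $1-e^{-\Theta(n_1n_2p_{max}^2)}$ claimed in the lemma; I would therefore present stage two as the dominant contribution and stage one as a routine control of $|S_i|$.
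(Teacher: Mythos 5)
Your proof is correct, and it takes a genuinely more elementary route than the paper's. The paper proves this lemma in one line: writing $\sum_j B_{ij}=\langle A_{i:},\sum_{j\neq i}A_{j:}\rangle$, it invokes the appendix concentration bound (Lemma \ref{lem:conc_prod_bin_gen2}) with $I=\lbrace i\rbrace$ and $J=[n_1]\setminus\lbrace i\rbrace$ and then union bounds over $i$; that appendix lemma is in turn proved by a decoupling argument with auxiliary Bernoulli variables, m.g.f.\ estimates, and conditioning on the column-sum event $\calE$. You observe that none of this machinery is needed here, precisely because $I$ is a singleton: conditioning on the single row $A_{i:}$ (your set $S_i$) already turns the row sum into a sum of independent Bernoulli variables over the distinct entries $\lbrace A_{jl}: j\neq i,\ l\in S_i\rbrace$, and two nested Chernoff bounds finish the job. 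Your version is self-contained and, as you note, avoids $\calE$ entirely; this is in fact slightly cleaner than the paper, whose appendix lemma only controls $\prob(\calE\cap\lbrace\cdot\rbrace)$, so the unconditional statement of the present lemma implicitly also charges $\prob(\calE^c)$. What the paper's route buys is reuse: Lemma \ref{lem:conc_prod_bin_gen2} is needed anyway for the discrepancy-property step of Theorem \ref{thm:improved_conc} with general index sets, so the degree bound there comes for free. The one caveat in your argument --- which you flag yourself --- is that your failure exponent is $\Theta\bigl(\min(n_2p_{max},\,n_1n_2p_{max}^2)\bigr)$, and this matches the stated $\Theta(n_1n_2p_{max}^2)$ only when $n_1p_{max}\lesssim 1$. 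That condition is not literally implied by the hypotheses of Theorem \ref{thm:improved_conc}, but it is the regime the paper itself operates in (its other proofs explicitly use $n_1p_{max}=O(1)$), and in all regimes both of your exponents dominate $\log n_1$, so the union bound over the $n_1$ rows closes and the resulting bound remains far stronger than the $n_1^{-\Theta(1)}$ failure probability actually required at the point where the lemma is applied.
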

\begin{proof}
Fix $i\in [n_1]$. We have $S=\sum_j B_{ij}=\langle A_{i:}, \sum_{j\neq i} A_{j:} \rangle$. One can apply Lemma \ref{lem:conc_prod_bin_gen2} (see appendix) with sets $I=\lbrace i \rbrace$ and $J=[n_1]\setminus I$. We conclude by using a union bound. 
\end{proof}

Then, we need to show that the matrix $B$ satisfies w.h.p. the discrepancy property defined below, with appropriate parameters.
 \begin{definition}
 Let $M$ be a $n\times n$ matrix with non-negative entries. For every $S,  T \subset [n]$, let $e_M(S,T)$ denote the number of edges between $S$ and $T$ \[ e_M(S,T)=\sum_{i \in S}\sum_{j\in T}M_{ij}.\] We say that $M$ obeys the discrepancy property DP$(\delta, \kappa_1, \kappa_2)$ with parameters
$\delta> 0$, $\kappa_1>0$ and $\kappa_2\geq 0$ if for all non-empty $S, T\subset [n]$, at least one of the following properties hold
\begin{enumerate}
    \item $e_M(S,T)\leq \kappa_1\delta |S||T|$;
    \item $e_M(S,T)\log \frac{e_M(S,T)}{\delta |S||T|}\leq \kappa_2 |S|\vee |T| \log \frac{en_1}{|S|\vee |T|}$.
\end{enumerate}
 \end{definition}
If one can show that $B$ satisfies $DP(\delta, \kappa_1, \kappa_2)$ where  $\kappa_1$, $\kappa_2>0$ are absolute constants  and $\delta = n_2p_{max}^2$, then Lemma B.4 in \cite{RobHyp2020} would imply that \[ (T2) \lesssim_{\kappa_1, \kappa_2} \sqrt{n_1n_2}p_{max}.\]
 Consequently, to bound (T2), it is sufficient to show that $B$ satisfies $DP(\delta, \kappa_1, \kappa_2)$. W.l.o.g., one can assume that $|S|\leq |T|$. If $|T|\geq \frac{n_1}{e}$ then Lemma \ref{lem:bounded_degree} leads to \[ \frac{e_B(S,T)}{\delta |S||T|}\leq \frac{|S|C_2n_2n_1p_{max}^2}{n_2p_{max}^2|S|n_1/e}\leq C_2e\] with probability at least $1-e^{\Theta(n_1n_2p_{max}^2)}$.
Otherwise, we can write $e_B(S,T) =\sum_{i,j}w_{ij}\langle A_{i:},A_{j:} \rangle$ where $w_{ii}=0$ and $w_{ij}=\indic_{i\in S}\indic_{j\in T}$ . By Lemma \ref{lem:conc_prod_bin_gen2} we have  \[ \prob \left(e_B(S,T) \geq C\delta |S||T| \right)\leq e^{-\frac{C}{2}\delta |S||T| }\] for all $C>C^*$ where $C^*>0$ is a large enough constant.
We can now continue as in the proof of Theorem 5.2 in \cite{lei2015}. For a given $c>0$, let us denote \[k(T,S)= \max\left( c\frac{|T| \log \frac{en_1}{ |T|}}{\delta |S||T|}, C^*\right).\] We have
\[ \prob \left(e_B(S,T) \geq k(T,S)\delta |S||T| \right)\leq e^{-\frac{c}{2}|T| \log \frac{en_1}{ |T|}}
\]
and thus
\begin{align*}
    &\prob\left(\exists S, T \subset [n_1], |S|\leq |T|: e_B(S,T) \geq k(T,S)\delta |S||T| \right) \\
    &\leq \sum_{I, J: |I|\leq |J|<n_1/e}e^{-\frac{c}{2}|T| \log \frac{en_1}{ |T|}}\\
    &\leq \sum_{s\leq t \leq n_1/e}\sum_{|S|=s,|T|=t}e^{-\frac{c}{2}|T| \log \frac{en_1}{ |T|}}\\
    &\leq \sum_{s\leq t \leq n_1/e}\binom{n_1}{s}\binom{n_1}{t}e^{-\frac{c}{2}t \log \frac{en_1}{ t}}\\
    &\leq \sum_{s\leq t \leq n_1/e}\left(\frac{en_1}{s}\right)^s\left(\frac{en_1}{t}\right)^te^{-\frac{c}{2}t \log \frac{en_1}{ t}}\\
    &\leq  \sum_{s\leq t \leq n_1/e} e^{-\frac{c}{2}t \log \frac{en_1}{ t}+t+s+t\log\frac{n_1}{t}+s\log\frac{n_1}{s}}\\
    &\leq \sum_{s\leq t \leq n_1/e}e^{-\frac{c}{2}t \log \frac{en_1}{ t}+2t+2t\log\frac{n_1}{t}}\\
    &\leq \sum_{s\leq t \leq n_1/e}e^{-\frac{c-8}{2}t \log \frac{en_1}{ t}}\\
    &\leq \sum_{s\leq t \leq n_1/e} n_1^{-\frac{c-8}{2}}\\
    &\leq n_1^{-\frac{c-12}{2}}
\end{align*}
by using repeatedly the fact that $t\log \frac{n_1}{t}$ is increasing on $[1,\frac{n_1}{e}]$. By choosing a constant $c>12$, we have shown that $B$ satisfies $DP(n_2p_{max}^2, \kappa_1,\kappa_2)$.

\paragraph{Step 4.} We can conclude by summing all the terms that have been shown to be $O(\sqrt{n_1n_2}p_{max})$ w.h.p.

\section{Entrywise analysis of the spectral method}

To show that the spectral method achieves exact recovery, we need to derive $\ell_{2\to \infty}$ eigenspace perturbation bound. Unfortunately, existing results only apply to symmetric matrices with independent entries or weak dependencies (see Section 7 in \cite{lihua20}) and cannot be directly applied to our setting. We propose an extension of the main result of \cite{lihua20} to the hollowed Gram matrix $B$ considered in this work. We believe that our result can be extended to more general Gram matrices or Kernel matrices.

\subsection{Notations and preliminary results}\label{subsec:intro}
First, let us introduce some notation. Let $\tilde{B}^*=\calH(PP^\top)$ and $B^*=PP^\top$. Let $\lambda_1\geq \ldots \geq \lambda_r$ (resp. $\lambda_1^*\geq \ldots \geq \lambda_r^*$) be the top$-r$ eigenvalues of $B$ (resp. $B^*)$ and $U$ (resp. $U^*$) the corresponding matrix of eigenvectors. 

The spectral decomposition of the matrices $B$ and $B^*$ is given by \[ B= \bar{U}\Lambda \bar{U}^\top, \quad  B^*= U^*\Lambda^* U^{* \top}\] where $\bar{U}$ is the full eigenspace matrix of $B$ and $\Lambda$ is the diagonal matrix of non-zero eigenvalues of $B$ (resp. $\Lambda^* =\diag (\lambda_1^*, \ldots, \lambda_r^*)$). The noise $E= B-B^*$ can be further decomposed as \[ \underbrace{\calH((A-P)(A-P)^\top)}_{\tilde{E}}+ \underbrace{\calH(P(A-P)^\top +(A-P)P^\top)}_{E'}+\underbrace{\tilde{B}^*-B^*}_{E''}. \]

First, let us establish analogous results to the Conditions (A2) and (A3) in \cite{lihua20}. 
\begin{lemma}\label{lem:lem_fond} Under the assumption of Theorem \ref{thm:main}, there is an absolute constant $C_1>0$, such that for any $W\in \R^{n\times K}$, the following inequalities hold with probability at least $1-n_1^{-\Theta(1)}$.
\begin{enumerate}
    \item $\norm{\Lambda -\Lambda^*}\leq C_1 \sqrt{n_1n_2p_{max}^2}$,
    \item $\norm{E}_{2\to \infty} \leq C_1 \sqrt{n_1n_2p_{max}^2}$,
    \item $\norm{EU^*}\leq C_1 \sqrt{n_1n_2p_{max}^2}$,
    \item  $\max_i\norm{E_{i:}W}\leq b_{\infty}(\delta)\norm{W}_{2\to \infty}+b_2(\delta)\norm{W}$, where $b_\infty(\delta) = C_1\frac{R(\delta)}{ \log R(\delta)} $ and $b_2(\delta)= C_1\frac{\sqrt{n_2p_{max}^2}R(\delta)}{\log R(\delta)}$ with $R(\delta)= \log (n_1/\delta)+K$ and $\delta=n_1^{-c}$ for some constant $c>0$.
\end{enumerate}
\end{lemma}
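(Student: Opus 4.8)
The plan is to read off the four bounds as the analogues of conditions (A2)--(A3) in \cite{lihua20}, exploiting the additive noise decomposition $E=\tilde E+E'+E''$ already recorded above, in which $\tilde E=\calH((A-P)(A-P)^\top)$ is the dominant mean-zero quadratic term, $E'=\calH(P(A-P)^\top+(A-P)P^\top)$ is a cross term, and $E''=\tilde B^*-B^*=-\diag(PP^\top)$ is a deterministic diagonal correction with $\norm{E''}=\max_i(PP^\top)_{ii}\le n_2p_{max}^2$. The starting observation is that $\expec B=\calH(\expec AA^\top)=\calH(PP^\top)=\tilde B^*$, so that Theorem~\ref{thm:improved_conc} controls $\norm{B-\expec B}=\norm{B-\tilde B^*}\lesssim\sqrt{n_1n_2p_{max}^2}$. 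Given this, items~1 and~3 are essentially free: Weyl's inequality gives $\norm{\Lambda-\Lambda^*}=\max_i|\lambda_i-\lambda_i^*|\le\norm{E}\le\norm{B-\tilde B^*}+\norm{E''}\lesssim\sqrt{n_1n_2p_{max}^2}+n_2p_{max}^2$, and since $n_2p_{max}^2=o(1)\le n_1$ the second term is dominated, proving item~1; item~3 then follows from submultiplicativity, $\norm{EU^*}\le\norm{E}\,\norm{U^*}=\norm{E}$, because $U^*$ has orthonormal columns.

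For item~2 I would bound $\norm{E}_{2\to\infty}=\max_i\norm{E_{i:}}$ row by row through the same decomposition. The contributions of $E''$ (only the diagonal entry, $\le n_2p_{max}^2$) and of $E'$ (whose row satisfies $\expec\norm{E'_{i:}}^2\le 2n_1n_2p_{max}^3=o(n_1n_2p_{max}^2)$, concentrated via the product-Bernoulli inequality of Lemma~\ref{lem:conc_prod_bin_gen2} and a union bound over $i$) are of lower order. The dominant term is handled by conditioning on row $i$ of $A$: then $\tilde E_{ij}=\langle (A-P)_{i:},(A-P)_{j:}\rangle$ for $j\ne i$ are \emph{independent} across $j$, mean-zero, with conditional variance $\le p_{max}\norm{(A-P)_{i:}}^2$, so $\norm{\tilde E_{i:}}^2=\sum_{j\ne i}\tilde E_{ij}^2$ has conditional mean $\lesssim n_1n_2p_{max}^2$ once one uses the concentration $\norm{(A-P)_{i:}}^2\lesssim n_2p_{max}$ (valid since $n_2p_{max}\gtrsim\log^{3/2}n_1$ under the stated assumptions). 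A Bernstein bound on this sum of independent bounded summands, together with the high-probability entrywise bound $\max_{i\ne j}|\tilde E_{ij}|\lesssim R(\delta)$, then gives $\norm{\tilde E_{i:}}\lesssim\sqrt{n_1n_2p_{max}^2}$ for each $i$; a union bound over $i\in[n_1]$ completes item~2. Note that this row-wise argument avoids an $\epsilon$-net over $\mathbb S^{n_1-1}$, which would be too lossy in the regime $n_2p_{max}^2=o(1)$.

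Item~4 is the technical heart. Writing $\tilde E_{i:}W=\sum_{j\ne i}\tilde E_{ij}W_{j:}$ and again conditioning on row $i$, this is a sum of conditionally independent mean-zero random vectors in $\R^K$. I would apply a vector (Bennett-type) concentration obtained from the product-Bernoulli inequality of Lemma~\ref{lem:conc_prod_bin_gen2}, splitting the deviation into two regimes: the variance proxy $\norm{\sum_{j}\expec[\tilde E_{ij}^2\mid\text{row }i]\,W_{j:}^\top W_{j:}}\le\sigma^2\norm{W}^2$ with $\sigma^2\lesssim n_2p_{max}^2$ produces the $b_2\norm{W}$ term, while the uniform bound $\norm{\tilde E_{ij}W_{j:}}\le(\max_j|\tilde E_{ij}|)\norm{W}_{2\to\infty}$ produces the $b_\infty\norm{W}_{2\to\infty}$ term. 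The common factor $R(\delta)/\log R(\delta)$ is exactly the inverse of the Bennett tail in the very sparse regime, calibrated so that each row fails with probability at most $\delta/n_1=n_1^{-c-1}$; the $+K$ inside $R(\delta)$ accounts for covering the unit sphere of $\R^K$ when passing from a scalar to a vector bound. A union bound over $i$ and absorption of the (lower-order) $E'$, $E''$ contributions into the constant $C_1$ finish the proof.

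The main obstacle is item~4, and within it the sharp $R(\delta)/\log R(\delta)$ dependence: a crude Bernstein inequality would only yield factors $\sqrt{R(\delta)}$ and $R(\delta)$, which are too weak. Obtaining the improved factor requires the Bennett-type analysis appropriate to the regime $n_2p_{max}^2=o(1)$ (where the relevant summands behave like rare events), carried out in vector form while correctly handling the conditioning and the independence structure of the quadratic statistic $\tilde E$. This is precisely the role of the refined product-Bernoulli concentration inequalities proved in the appendix, and matching their tail to the $\ell_{2\to\infty}$ framework of \cite{lihua20} is the delicate step.
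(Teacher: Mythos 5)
Your items 1 and 3 follow the same route as the paper (Weyl's inequality plus submultiplicativity of the norm and $\norm{U^*}\leq 1$), and your way of bounding $\norm{E}$ is in fact slightly cleaner: you apply Theorem \ref{thm:improved_conc} directly to $B-\expec B=\tilde E+E'$ and add the diagonal bound $\norm{E''}\leq n_2p_{max}^2$, whereas the paper bounds $\tilde E$ and $E'$ separately (the latter via Lemma 5 of \cite{braun2022minimax} together with $n_1p_{max}=O(1)$). Item 2, however, you over-engineer for no reason: the target bound is the same $C_1\sqrt{n_1n_2p_{max}^2}$ as the spectral-norm bound, so the paper simply writes $\norm{E}_{2\to\infty}\leq\norm{E}$; your claim that this route is ``too lossy'' in the regime $n_2p_{max}^2=o(1)$ is a misconception, and your row-wise Bernstein detour, even if salvageable, buys nothing.

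The genuine gap is in item 4, which you yourself identify as the technical heart. You propose to extract the $R(\delta)/\log R(\delta)$ factor from a ``vector Bennett-type concentration obtained from the product-Bernoulli inequality of Lemma \ref{lem:conc_prod_bin_gen2}.'' That lemma cannot do this job. Lemma \ref{lem:conc_prod_bin_gen2} (like Lemma \ref{lem:conc_prod_bin_gen}) is engineered for the spectral-norm proof of Theorem \ref{thm:improved_conc}: it controls $\sum_{i,j}w_{ij}\langle A_{i:},A_{j:}\rangle$ for indicator weights, at the single fixed deviation scale $C|I||J|n_2p_{max}^2$, conditionally on the global event $\calE$, with failure probability $e^{-\Theta(n_2p_{max}^2|I||J|)}$; it has no adjustable confidence parameter $\delta$, no $(\norm{w}_\infty,\norm{w})$ structure in its conclusion, and it concerns the uncentered matrix $A$ rather than $R=A-P$. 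What actually produces the Bennett factor is Lemma F.3 of \cite{lihua20} --- a weighted-sum inequality for independent centered Bernoulli variables with tail function $f(\delta)=2\log(1/\delta)/F^{-1}(2\log(1/\delta))$, $F(t)=t^2e^t$ --- applied conditionally on $R_{i:}$ to $\tilde E_{i:}w=\sum_{j\neq i,l}R_{il}R_{jl}w_j$ with weights $\tilde w_{jl}=R_{il}w_j$, combined with Hoeffding's bound $\norm{R_{i:}}^2\lesssim n_2p_{max}$ to convert $\norm{\tilde w}$ into $\sqrt{n_2p_{max}^2}\norm{w}$ (the source of $b_2$), and with Proposition 2.2 of \cite{lihua20} --- the $5^K$-covering step you correctly anticipate --- to pass from fixed vectors $w$ to matrices $W$, replacing $\delta$ by $\delta/(5^Kn_1)$ and hence $\log(1/\delta)$ by $R(\delta)=\log(n_1/\delta)+K$. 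The cross terms $E'_{i:}w$ are likewise handled by Lemma F.3 (with weights $w_jP_{il}$ and $P_{jl}w_j$), not by the appendix lemmas. So your architecture (condition on row $i$, two-regime Bennett bound, covering in $\R^K$, absorb $E'$ and $E''$) is the right one, but the concrete inequality you invoke would fail to deliver it; you would need to import Lemma F.3 of \cite{lihua20}, or prove an analogous adjustable-confidence weighted-sum inequality, to close the argument.
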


\begin{proof} Recall that by Theorem \ref{thm:improved_conc}, we have with probability at least $1-n_1^{-\Theta(1)}$ \[ \norm{\tilde{E}}\lesssim \sqrt{n_1n_2p_{max}^2}. \] Also, by definition, $\max_i \norm{P_{i:}}^2\leq n_2p_{max}^2 $ so $\norm{E''}\leq \sqrt{n_2}p_{max}=o(\sqrt{n_1n_2p_{max}^2})$. By Lemma 5 in \cite{braun2022minimax} we have $\norm{(A-P)Z_2}\lesssim \sqrt{n_1n_2p_{max}}$, hence by submultiplicativity of the norm \[ \norm{E'}\lesssim \sqrt{n_1n_2p_{max}} \norm{\Pi Z_1^\top}\lesssim \sqrt{n_2p_{max}}n_1p_{max} = O(\sqrt{n_1n_2p_{max}^2})\] because $n_1p_{max}=O(1)$ by assumption. Consequently, the dominant error term is $\tilde{E}$ and we have shown that \begin{equation}\label{eq:norm_error}
    \norm{E}\lesssim \sqrt{n_1n_2p_{max}^2}.
\end{equation}
\paragraph{Proof of 1.} This is a direct consequence of Weyl's inequality and \eqref{eq:norm_error}.

\paragraph{Proof of 2.} It follows from the fact that $\norm{E}_{2\to \infty} \leq \norm{E}$.

\paragraph{Proof of 3.} It is a direct consequence of the sub-multiplicativity of the norm and the fact that $\norm{U^*}\leq 1$.

\paragraph{Proof of 4.} By Proposition 2.2 in \cite{lihua20}, if we can show that for any $\delta \in (0,1)$ and vector $w\in \R^{n_1}$ there exist $a_\infty(\delta), a_2(\delta)>0$ such that for each $i\in [n_1]$ 
\[ E_{i:}w\leq a_\infty(\delta) \norm{w}_\infty +a_2(\delta) \norm{w} \]
with probability at least $1-\delta$, then we can choose $b_\infty(\delta)=2a_\infty(\frac{\delta}{5^Kn_1})$ and $b_2(\delta)=2a_2(\frac{\delta}{5^Kn_1})$. Fix $w\in \R^{n_1}, i\in [n_1]$ and let us denote $R=A-P$. Consider $S = \tilde{E}_{i:}w =\sum_{j\in [n_2]\setminus{\lbrace i\rbrace }}\langle R_i,R_j \rangle w_j$. Conditionally on $R_i$, this is a sum of independent and centered r.v.s. By using Lemma F.3 in \cite{lihua20} with weights $\tilde{w}_{jl}=R_{il}w_j$ we obtain that conditionally on $R_i$ the following holds with probability at least $1-\delta$
\[ S\leq f(\delta)\left( \norm{\tilde{w}}_\infty+\sqrt{\sum_{j,k}\tilde{w}_{jl}^2p_{jl}}\right)\]
where $f(\delta)=\frac{2\log (1/\delta)}{F^{-1}(2\log(1/\delta)}$ and $F(t)=t^2e^t$. But $\norm{\tilde{w}}_\infty \leq \norm{w}_\infty$ and 
\[ \norm{\tilde{w}}^2=\sum_{j,l}R_{il}^2w_j^2=\norm{w}^2\norm{R_{i:}}^2. \] 
Besides, with probability at least $1-e^{\Theta(n_2p_{max})}$, $\norm{R_{i:}}^2 \leq Cn_2p_{max} $ by Hoeffding's inequality. Therefore with probability at least $1-\delta - e^{-\Theta(n_2p_{max})}$ \[ S\leq f(\delta)\left( (\norm{w}_\infty+\sqrt{n_2p_{max}^2}\norm{w}\right). \] 
It remains to bound $S'= E'_{i:}w =\sum_{j\in [n_2]\setminus{\lbrace i\rbrace }}(\langle R_i,P_j \rangle+ \langle P_i,R_j \rangle) w_j $ and $S''= E''_{i:}w $. We have
\[S''= \norm{P_{i:}}^2w_i\leq \norm{w}_\infty n_2p_{max}^2=o(\norm{w}_\infty).\]
Also observe that $=\sum_{j\in [n_2]\setminus{\lbrace i\rbrace }}\langle R_i,P_j \rangle w_j=\sum_{j\neq i,l}R_{jl}w_jP_{il}$, so we can apply Lemma F.3 in \cite{lihua20} with weights $(w_jP_{il})_{j\neq i,l}$. We obtain
\[ S''\leq f(\delta)p_{max}\left( \norm{w}_\infty+ \norm{w}\sqrt{n_2p_{max}}\right)\]
with probability at least $1-\delta$. A similar result holds for $S'=\sum_{j\neq i,l} \langle P_i,R_j \rangle w_j$: we can apply again Lemma F.3 in \cite{lihua20} with weights $(P_{jl}w_j)_{j,l}$ and obtain
\[ S' \leq f(\delta) p_{max}\left(  \norm{w}_\infty+ \norm{w}\sqrt{n_2p_{max}}\right).\]

So we can choose $a_\infty(\delta)= f(\delta)$ and $a_2(\delta)=f(\delta)\sqrt{n_2p_{max}^2}$. One can check that, as in Lemma 3.1 in \cite{lihua20},  $b_\infty(\delta) = \frac{4R(\delta)}{\log R(\delta)}$ and $b_2(\delta)= 4\frac{\sqrt{n_2p_{max}^2}R(\delta)}{\log R(\delta)}$. Also note that $n_2p_{max}\gtrsim \log n_1$ by assumption so if we choose $\delta = n_1^{-c}$ for an appropriate constant $c>0$, the term $e^{-\Theta(n_2p_{max})}$ will be negligible compared to $\delta$.
\end{proof}

\subsection{A new decoupling argument}\label{subsec:decoupling}
The main difficulty to adapting Theorem 2.3 and 2.5 \cite{lihua20} comes from the decoupling assumption (A1) which requires the existence of a matrix $B^{(i)}$ (typically obtained by replacing the $i$-th row and column of $B$ by zeros or the expectation of the entries) such that for any $\delta\in (0,1)$ \begin{equation}\label{eq:a1}
    d_{TV}(\prob_{(B_i,B^{(i)})}, \prob_{B_i}\times \prob_{B^{(i)}})\leq \frac{\delta}{n}.
\end{equation}
If the matrix $B$ had independent entries it would be straightforward to satisfy this condition, but in our setting, it is not clear how to obtain such a general result. Consequently, we adopted a different approach that avoids bounding the total variation distance between two probability distributions.  

Let us denote by $B^{(i)}$ the matrix obtained  by removing the $i$-th row and column of $B$. We have \[ \norm{B^{(i)}-B} \leq \norm{B_{i:}} \leq \norm{E_{i:}} + \norm{B^*_{i:}} \lesssim \sqrt{n_1n_2p_{max}^2} \] with probability at least $1-e^{\Theta(n_1n_2p_{max}^2)}$, since $\norm{B^*_{i:}}\leq \sqrt{n_1}n_2p_{max}^2$, $n_2p_{max}^2=o(1)$, and $\norm{E_{i:}}\leq \norm{E} \leq \sqrt{n_1n_2p_{max}^2}$.

We also have by definition \begin{align*}
    \norm{(B^{(i)}-B)U)}&\leq \norm{B_{i:}U}+\norm{B_{i:}U_{i:}^\top}\\
    &\leq \norm{(BU)_{i:}}+\norm{B_{i:}}\norm{U_{i:}}\\
    &\leq \norm{U_{i:}\Lambda}+\norm{B_{i:}}\norm{U_{i:}}\\
    &\leq (\norm{\Lambda}+\norm{B_{i:}})\norm{U_{i:}}.
\end{align*}
 Hence, because of assumption \ref{ass:eig_low_bd} we obtain that w.h.p. \[ \frac{ \norm{(B^{(i)}-B)U)}}{\lambda_r^*}\lesssim (1+\frac{1}{\sqrt{n_1n_2p_{max}^2}}) \norm{U_{i:}} \lesssim \norm{U}_{2\to \infty}.\]
 
 These inequalities correspond to the Condition (C0) used in the proof of Theorem 2.3 in \cite{lihua20}. They are summarized in the following lemma.
 \begin{lemma} The following inequalities hold with probability at least  $1-e^{\Theta(n_1n_2p_{max}^2)}$
 \begin{enumerate}
     \item  $\norm{B^{(i)}-B} \lesssim \sqrt{n_1n_2p_{max}^2}$,
     \item $\frac{ \norm{(B^{(i)}-B)U)}}{\lambda_r^*}\lesssim \norm{U_{i:}}\lesssim \norm{U}_{2\to \infty}.$
 \end{enumerate}
 
 \end{lemma}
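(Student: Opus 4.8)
The plan is to exploit the very simple structure of the perturbation $D := B - B^{(i)}$. Since $B^{(i)}$ is $B$ with its $i$-th row and column deleted (set to zero), $D$ is supported only on row $i$ and column $i$. Because $B$ is symmetric and diagonally hollowed, so that $B_{ii}=0$, I can write $D = e_i b^\top + b e_i^\top$ with $b := B_{i:}^\top$ and $b_i = 0$. This is a rank-$\le 2$ ``cross'' matrix acting by $Dx = (b^\top x)e_i + x_i b$, and since $b_i=0$ the two contributions are orthogonal, so $\norm{D} \le \sqrt{2}\,\norm{b} = \sqrt{2}\,\norm{B_{i:}}$. Thus the entire lemma reduces to controlling the row norm $\norm{B_{i:}}$ together with the product $B_{i:}U$, and no new concentration machinery is really needed beyond what is already available.

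For part 1, I would bound $\norm{B_{i:}}$ through the decomposition $B = B^* + E$ and the triangle inequality, $\norm{B_{i:}} \le \norm{E_{i:}} + \norm{B^*_{i:}}$. The noise row is controlled by $\norm{E_{i:}} \le \norm{E}_{2\to\infty} \le \norm{E} \lesssim \sqrt{n_1 n_2 p_{max}^2}$, which is exactly Lemma \ref{lem:lem_fond} (ultimately Theorem \ref{thm:improved_conc}); to get the stronger exponential failure probability claimed in the lemma one should instead bound the single row $\norm{E_{i:}}$ directly by a Bernstein/Chernoff argument, in the spirit of Lemma \ref{lem:bounded_degree}. The population term is deterministic: each entry of $B^* = PP^\top$ is an inner product of two rows of $P$, hence at most $n_2 p_{max}^2$, so $\norm{B^*_{i:}} \le \sqrt{n_1}\,n_2 p_{max}^2 = \sqrt{n_2 p_{max}^2}\cdot\sqrt{n_1 n_2 p_{max}^2}$, which is $o(\sqrt{n_1 n_2 p_{max}^2})$ because $n_2 p_{max}^2 = o(1)$. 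Combining these gives part 1.

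For part 2, I would use the eigenvector identity $BU = U\Lambda$, valid because the columns of $U$ are the top-$r$ eigenvectors of $B$. Applying $D$ to $U$ yields $DU = e_i(B_{i:}U) + b\,U_{i:}$, and bounding the two pieces separately gives $\norm{DU} \le \norm{(BU)_{i:}} + \norm{B_{i:}}\,\norm{U_{i:}} = \norm{U_{i:}\Lambda} + \norm{B_{i:}}\,\norm{U_{i:}} \le (\norm{\Lambda} + \norm{B_{i:}})\,\norm{U_{i:}}$. Dividing by $\lambda_r^*$, I would then invoke the eigenvalue estimates: $\norm{\Lambda} = \lambda_1 \asymp \lambda_1^*$ by Weyl's inequality (Lemma \ref{lem:lem_fond}), and the key quantitative input $\lambda_r^* \gtrsim n_1 n_2 p_{max}^2$, which follows from Assumption \ref{ass:eig_low_bd} together with the balance Assumption \ref{ass:balanced_part} (since $PP^\top = Z_1\Pi(Z_2^\top Z_2)\Pi^\top Z_1^\top$ has smallest nonzero eigenvalue of order $n_1 n_2\,\lambda_{min}(\Pi\Pi^\top) \gtrsim n_1 n_2 p_{max}^2$). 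Hence $\norm{\Lambda}/\lambda_r^* = \Theta(1)$ while $\norm{B_{i:}}/\lambda_r^* \lesssim 1/\sqrt{n_1 n_2 p_{max}^2} = o(1)$ by part 1, so that $\norm{DU}/\lambda_r^* \lesssim \norm{U_{i:}} \le \norm{U}_{2\to\infty}$.

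The computation is a chain of elementary norm manipulations, and I expect the only genuinely delicate point to be the ratio control in part 2: the whole argument works precisely because the $\Theta(1)$ term $\norm{\Lambda}/\lambda_r^*$ does not blow up, which is where the lower bound $\lambda_r^* \gtrsim n_1 n_2 p_{max}^2$ is indispensable. Since $\Pi\Pi^\top$ may be \emph{rank deficient}, this bound must be read off from the smallest \emph{nonzero} eigenvalue via Assumption \ref{ass:eig_low_bd} rather than from any full-rank invertibility, and it is this care that lets the lemma serve as the analogue of Condition (C0) needed for the leave-one-out analysis.
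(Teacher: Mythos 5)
Your proof is correct and takes essentially the same route as the paper: bound $\norm{B^{(i)}-B}$ by $\norm{B_{i:}}$ using the rank-two cross structure of the perturbation, split $\norm{B_{i:}}\leq \norm{E_{i:}}+\norm{B^*_{i:}}$ with $\norm{E_{i:}}\leq\norm{E}\lesssim\sqrt{n_1n_2p_{max}^2}$ and $\norm{B^*_{i:}}\leq\sqrt{n_1}\,n_2p_{max}^2=o(\sqrt{n_1n_2p_{max}^2})$, and for part 2 use the identity $BU=U\Lambda$ to reach $(\norm{\Lambda}+\norm{B_{i:}})\norm{U_{i:}}$ before dividing by $\lambda_r^*\gtrsim n_1n_2p_{max}^2$ via Assumptions \ref{ass:balanced_part} and \ref{ass:eig_low_bd}. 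Your additional remarks (the explicit $\sqrt{2}$ bound, the suggestion of a direct row-wise Bernstein bound to match the stated exponential failure probability, and the spelled-out lower bound on the smallest nonzero eigenvalue of $PP^\top$) merely make explicit details the paper leaves implicit.
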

 
 Steps one and two of the proof of Theorem 2.3 \cite{lihua20} are deterministic and still hold in our setting (see the discussion in Section \ref{subsec:concl}). The only step that uses the decoupling argument is the third step where one needs to bound $\norm{E_{i:}(U^{(i)}H^{(i)}-U^*)}$ where $H^{(i)}\in \R^{r\times r}$ is the orthogonal matrix that best aligns $U^{(i)}$ and $U^*$. 
 
 
 \begin{lemma}\label{lem:4} Let $W^{(i)}\in \R^{n_1\times K}$ be a matrix that only depends on $B^{(i)}$. Under the assumptions on Theorem \ref{thm:main}, it holds with probability at least $1-n_1^{-c'}$ for  some constant $c'>0$ that for all $i\in [n_1]$ \[ \norm{E_{i:}W^{(i)}}\lesssim \frac{\log n_1}{\log \log n_1} \norm{W^{(i)}}_{2\to \infty}+\frac{\sqrt{n_2}p_{max}\log n_1}{\log \log n_1} \norm{W^{(i)}}.\] 
 \end{lemma}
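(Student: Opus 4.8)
The plan is to exploit the leave-one-out independence structure that replaces the total-variation decoupling assumption (A1) of \cite{lihua20}. Since $W^{(i)}$ depends only on $B^{(i)}$, and $B^{(i)}$ is a function of the rows $\{A_{j:}:j\neq i\}$ alone, the matrix $W^{(i)}$ is measurable with respect to $\calF_{-i}:=\sigma(\{A_{j:}:j\neq i\})$ and is therefore \emph{independent} of the ``fresh'' noise $R_{i:}:=A_{i:}-P_{i:}$ in row $i$. Rather than bounding a total-variation distance, I would simply condition on $\calF_{-i}$, which freezes $W^{(i)}$ and all rows $R_{j:}$ $(j\neq i)$ to constants and leaves $R_{i:}$ as a vector of independent, centered, bounded entries. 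All the randomness used in the concentration step then comes from $R_{i:}$.

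Next I would split $E_{i:}W^{(i)}$ according to $E=\tilde E+E'+E''$ from Section \ref{subsec:intro} and sort the pieces by whether they involve the fresh noise. Writing $E_{ij}=\dotprod{R_{i:}}{R_{j:}}+\dotprod{R_{i:}}{P_{j:}}+\dotprod{P_{i:}}{R_{j:}}$ for $j\neq i$ and $E_{ii}=-\norm{P_{i:}}^2$, the two terms $\sum_{j\neq i}\dotprod{R_{i:}}{R_{j:}}W^{(i)}_{jm}$ and $\sum_{j\neq i}\dotprod{R_{i:}}{P_{j:}}W^{(i)}_{jm}$ are, conditionally on $\calF_{-i}$, linear forms $\sum_l R_{il}\,\omega_{lm}$ in the fresh variables, with frozen weights $\omega_{lm}=\sum_{j\neq i}R_{jl}W^{(i)}_{jm}$ and $\omega_{lm}=\sum_{j\neq i}P_{jl}W^{(i)}_{jm}$ respectively. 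To each I would apply the Bennett-type inequality (Lemma F.3 in \cite{lihua20}), exactly as in the proof of part 4 of Lemma \ref{lem:lem_fond}, obtaining conditional bounds of the form $f(\delta)\big(\norm{\omega_{:m}}_\infty+\sqrt{p_{max}}\,\norm{\omega_{:m}}\big)$. The weights are then controlled by $\norm{W^{(i)}}_{2\to\infty}$ and $\norm{W^{(i)}}$: on the event $\calE$ each column of $A$ carries $O(\sqrt{\log n_1})$ ones, which bounds $\norm{\omega_{:m}}_\infty$ through $\norm{W^{(i)}}_{2\to\infty}$, while $\norm{\omega_{:m}}\leq\norm{A-P}\,\norm{W^{(i)}}\lesssim\sqrt{n_2 p_{max}}\,\norm{W^{(i)}}$ controls the $\ell_2$ part (the cross term with $P$ in place of $R_{-i}$ is handled identically and is of smaller order, using $\norm{P}\lesssim p_{max}\sqrt{n_1n_2}$ and $n_1p_{max}=O(1)$). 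Plugging in $f(\delta)\asymp R(\delta)/\log R(\delta)$ with $\delta=n_1^{-c}$ yields coefficients of the claimed order $\tfrac{\log n_1}{\log\log n_1}$ and $\tfrac{\sqrt{n_2}p_{max}\log n_1}{\log\log n_1}$.

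The delicate piece is the remaining cross term $\sum_{j\neq i}\dotprod{P_{i:}}{R_{j:}}W^{(i)}_{jm}$ (together with the harmless diagonal contribution $-\norm{P_{i:}}^2W^{(i)}_{im}$, which is $o(1)\norm{W^{(i)}}_{2\to\infty}$ since $n_2p_{max}^2=o(1)$). This term contains no fresh randomness: it is entangled with $W^{(i)}$ through the rows $R_{j:}$, $j\neq i$, so the conditional concentration above does not reach it. The key observation I would use is that $P$ is block-constant, $P_{il}=\pi_{z_i z'_l}$, so $\dotprod{P_{i:}}{R_{j:}}$ only sees the noise aggregated over the column communities; consequently $\sum_{j\neq i}\dotprod{P_{i:}}{R_{j:}}W^{(i)}_{j:}$ can be routed through the community-aggregated noise matrix $(A-P)Z_2$ and bounded, on the high-probability event of Lemma 5 in \cite{braun2022minimax}, by $p_{max}\norm{(A-P)Z_2}\,\norm{W^{(i)}}\lesssim p_{max}\sqrt{n_1n_2p_{max}}\,\norm{W^{(i)}}=\sqrt{n_1p_{max}}\cdot\sqrt{n_2}p_{max}\,\norm{W^{(i)}}$. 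Since $n_1p_{max}=O(1)$ in the regime of Theorem \ref{thm:main}, this is absorbed into the second claimed term.

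Finally I would take a union bound over $i\in[n_1]$ and over the $K$ output coordinates $m$ (the latter costing only a $5^K$ factor, as in the reduction from $a_\infty,a_2$ to $b_\infty,b_2$ in Lemma \ref{lem:lem_fond}), choosing the constant $c$ in $\delta=n_1^{-c}$ large enough that the failure probabilities of Lemma F.3, of $\calE$, and of the spectral-norm bounds on $A-P$ and $(A-P)Z_2$ all sum to at most $n_1^{-c'}$. The main obstacle, and the genuinely new step relative to \cite{lihua20}, is precisely the non-fresh cross term: recognizing that the decoupling leaves it outside the scope of the Bennett inequality, and then taming it deterministically via the block structure of $P$ and the spectral control of the aggregated noise $(A-P)Z_2$.
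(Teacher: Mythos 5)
Your overall route is the same as the paper's: replace the total--variation decoupling (A1) of \cite{lihua20} by conditioning on $\calF_{-i}=\sigma(\lbrace A_{j:}:j\neq i\rbrace)$, split $E=\tilde E+E'+E''$, apply the Bennett-type Lemma F.3 of \cite{lihua20} to the pieces that are linear in the fresh noise $R_{i:}$, and finish with the $\epsilon$-net reduction (Proposition 2.2 of \cite{lihua20}) and a union bound over $i$. Where you depart from the paper is the entangled cross term $\sum_{j\neq i}\langle P_{i:},R_{j:}\rangle W^{(i)}_{jm}$: the paper claims $E'_{i:}$ is ``independent of $A_{-i}$'' and applies Lemma F.3 with weights $(P_{il}w^{(i)}_j)_{j,l}$, even though those weights are functions of the very variables $R_{jl}$ being summed; your deterministic bound $\lesssim p_{max}\norm{(A-P)Z_2}\norm{W^{(i)}}\lesssim \sqrt{n_1p_{max}}\cdot\sqrt{n_2}p_{max}\norm{W^{(i)}}$ via the block structure of $P$ and Lemma 5 of \cite{braun2022minimax} is a sound replacement for that step, and is a genuine improvement.

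However, your treatment of the dominant term $\tilde E_{i:}W^{(i)}$ has a real gap. After conditioning on $\calF_{-i}$, the linear form is $\sum_l R_{il}\,\omega_{lm}$ with frozen weights $\omega_{lm}=\sum_{j\neq i}R_{jl}W^{(i)}_{jm}$, and the only sup-norm control the event $\calE$ gives you is $\norm{\omega_{:m}}_\infty\lesssim \sqrt{\log n_1}\,\norm{W^{(i)}}_{2\to\infty}$ (a column may carry $\Theta(\sqrt{\log n_1})$ ones sitting on indices where $W^{(i)}_{:m}$ is near-maximal with constant sign, and $W^{(i)}$ is an arbitrary $\calF_{-i}$-measurable matrix). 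Feeding this into Lemma F.3 with $f(\delta)\asymp \tfrac{\log n_1}{\log\log n_1}$ produces a coefficient of order $\tfrac{\log^{3/2} n_1}{\log\log n_1}$ on $\norm{W^{(i)}}_{2\to\infty}$, not the claimed $\tfrac{\log n_1}{\log\log n_1}$; so your estimates do not deliver the lemma as stated. The loss is not cosmetic: $b_\infty(\delta)$ enters $\sigma(\delta)$ in Theorem \ref{thm:lihua_ext}, and in the critical regime $n_1n_2p_{max}^2\asymp\log n_1$ one needs $\sigma(\delta)/\Delta^*\lesssim 1/\log\log n_1$ with $\Delta^*\asymp n_1n_2p_{max}^2$; an extra $\sqrt{\log n_1}$ makes this ratio diverge, Condition \ref{cond:c4} fails, and the final bound $d_{2\to\infty}(U,U^*)\le c/\sqrt{n_1}$ (hence exact recovery at the threshold) collapses. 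The paper's bookkeeping differs at exactly this point: it invokes Lemma F.3 with the \emph{pair-indexed} weights $\tilde w_{jl}=R_{jl}w^{(i)}_j$, so the sup-norm term is $\norm{\tilde w}_\infty\le\norm{w^{(i)}}_\infty$ and the variance term is $p_{max}\norm{\tilde w}^2\lesssim n_2p_{max}^2\norm{w^{(i)}}^2$ by the Fact $\max_j\sum_l R_{jl}^2\lesssim n_2p_{max}$. Note that conditionally on $A_{-i}$ the only independent variables are the $n_2$ entries $R_{il}$, whose effective weights are precisely your $\omega_l$, so reconciling the pair-indexed application with this collapse is the delicate step your computation exposes; to close the gap one needs a finer argument than a crude sup-norm bound, e.g.\ the $L_1/L_2$ splitting of columns by $\sum_j A_{jl}$ used in the proof of Theorem \ref{thm:improved_conc}, showing that the rare heavy columns contribute negligibly.
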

 
\begin{proof}
Recall that $E=\tilde{E}+E'+E''$. By triangular inequality \[ \norm{E_{i:}W^{(i)}} \leq \norm{\tilde{E}_{i:}W^{(i)}}+\norm{E'_{i:}W^{(i)}}+\norm{E_{i:}''W^{(i)}}.\]
We will first handle the first term.  Let us denote $R=A-P$ and consider \[S=\tilde{E}_{i:}w^{(i)} = \sum_{j\in [n_1]\setminus{i},l\in [n_2]}R_{il}R_{jl}w_j^{(i)}\]  where $w^{(i)}\in \R^{n_1}$ depends on $A_{-i}$. 

Conditionally on $A_{-i}$, $S$ is a weighted sum of independent and centered Bernoulli's r.v. Hence, by Lemma F.3 in \cite{lihua20} with $\delta=n_1^{-c}$, and weights $\tilde{w}_{jl}=R_{jl}w_j^{(i)}$ we obtain 
\[ \prob\left( S \gtrsim \frac{\log n_1}{\log \log n_1}\left(\norm{\tilde{w}}_\infty+\sqrt{p_{max}}\norm{\tilde{w}}\right)\middle| A_{-i}\right)\leq n^{-c}.\]
Since $R_{jl}\leq 1$, we have $\norm{\tilde{w}}_\infty\leq \norm{w}_\infty$. By definition we have \[ \norm{\tilde{w}}^2=\sum_{j\neq i,l}R_{jl}^2(w_j^{(i)})^2.\]

\paragraph{Fact.}We have with probability at least $1-e^{-\Theta(n_2p_{max})}$ that \[ \max_{j\neq k}\sum_l R_{jl}^2 \lesssim n_2p_{max}.\]

\begin{proof}[Proof of the Fact.]
We have $R_{jl}^2\leq 1$ and $\Var(\sum_l R_{jl}^2)\leq 2n_2p_{max}$. Hence by Bernstein inequality, \[ \prob(|\sum_l R_{jl}^2 -\expec(\sum_l R_{jl}^2)|\gtrsim n_2p_{max} )\leq e^{-\Theta(n_2p_{max})}.\] We can conclude by a union bound and the fact that $n_2p_{max}\gtrsim \log n_1$ by assumptions on the sparsity level $p_{max}$ and $n_2\gtrsim n_1\log n _1$.
\end{proof}

Let us denote by $\Omega_1$ the event under which the inequality of the previous fact holds. Note that this event only depends on $A_{-i}$. We have $\prob(\Omega_1^c)\leq e^{-\Theta(n_2p_{max})}$. Consequently \begin{align*}
     \prob&\left(  S \gtrsim \frac{\log n_1}{\log \log n_1}\left(\norm{w^{(i)}}_\infty+\sqrt{n_2}p_{max}\norm{w^{(i)}}\right)\right)\\
     &\leq \prob\left( S \gtrsim \frac{\log n_1}{\log \log n_1}\left(\norm{w^{(i)}}_\infty+\sqrt{n_2}p_{max}\norm{w^{(i)}}\right) \middle|\Omega_1 \right) + e^{-\Theta(n_2p_{max})}\\
     &\leq \expec_{\Omega_1}\prob\left( S \gtrsim \frac{\log n_1}{\log \log n_1}\left(\norm{w^{(i)}}_\infty+\sqrt{n_2}p_{max}\norm{w^{(i)}}\right) \middle|A_{-i} \right)+e^{-\Theta(n_2p_{max})}\\
     &\leq \expec_{\Omega_1}\prob\left( S \gtrsim \frac{\log n_1}{\log \log n_1}\left(\norm{\tilde{w}}_\infty+\sqrt{p_{max}}\norm{\tilde{w}}\right) \middle|A_{-i} \right)+e^{-\Theta(n_2p_{max})}\\
     &\leq n_1^{-c}+e^{-\Theta(n_2p_{max})}
\end{align*}
where $\expec_{\Omega_1}$ denotes the expectation over $A_{-i}$ conditioned on $\Omega_1$.

The other terms $E'_{i:}w^{(i)}$ and $E_{i:}''w^{(i)}$ can be handled in a similar way. They are actually easier to treat because one doesn't need to use a conditioning argument since $E'_{i:}$, $E''_{i:}$ are independent of $A_{-i}$. 

We have by definition \[E''_{i:}w^{(i)}\leq n_2p_{max}^2\norm{w^{(i)}}_\infty \ll \frac{\log n_1}{\log \log n_1}\norm{w^{(i)}}_\infty.\]
Also we can decompose \[ E'_{i:}w^{(i)}= \underbrace{\sum_{j\neq i,l}R_{il}P_{jl}w^{(i)}_j}_{S_1}+\underbrace{\sum_{j\neq i,l}P_{il}R_{jl}w^{(i)}_j}_{S_2}.\]
$S_1$ is a sum of $n_2$ weighted independent Bernoulli's r.v.  with weights given by $w_l=\sum_{j\neq i}P_{jl}w_j^{(i)}$. Lemma F.3 in \cite{lihua20} gives with probability at least $1-n_1^{-c}$ \begin{align*}
    S_1 &\lesssim \frac{\log n_1}{\log \log n_1}\left(\norm{w}_\infty+\sqrt{p_{max}}\norm{w} \right)\\
&\lesssim \frac{\log n_1}{\log \log n_1}\left(n_1p_{max}\norm{w^{(i)}}_\infty+ \sqrt{n_1}p_{max} \norm{w^{(i)}}\right).
\end{align*} 

By a similar argument, we can show that with probability at least $1-n_1^{-c}$  \[ S_2 \lesssim  \frac{\log n_1}{\log \log n_1} \left( p_{max}\norm{w^{(i)}}_\infty+\sqrt{n_2}p_{max}^{1.5}\norm{w^{(i)}}\right).\] 

Consequently, with probability at least $1-O(n_1^{-c})$, \[ E_{i:}w^{(i)} \lesssim \frac{\log n_1}{\log \log n_1}\left(\norm{w^{(i)}}_\infty+\sqrt{n_2}p_{max}\norm{w^{(i)}}\right).\]

Then, by using Proposition 2.2 ($\epsilon$-net argument)  in \cite{lihua20} we obtain that with probability at least $1-O(n_1^{-c})$  \begin{equation}\label{eq:step3}
    \norm{E_{i:}W^{(i)}}\lesssim  \frac{\log n_1}{\log \log n_1}\norm{W^{(i)}}_{2\to \infty}+ \frac{\sqrt{n_2}p_{max}\log n_1}{\log \log n_1} \norm{W^{(i)}}.
\end{equation} 
Once we have obtained this inequality, the proof of Step III. is the same as in \cite{lihua20}.
\end{proof}

\subsection{Proof of Theorem \ref{thm:main}} \label{subsec:concl}

First, we will extend Theorem 2.3 in \cite{lihua20}. In order to make the adaptation easier, we will use the same notations as in \cite{lihua20}.
Let $\Delta^*=\lambda_{min}^*$ be the effective eigengap (it corresponds with the definition in \cite{lihua20}, with $s=0$). In our setting, the condition number $\bar{\kappa}$ only depends on $K$ and $L$ and hence is considered a constant.  Also, observe that $U^*$ is the full eigenspace of $B^*$. We have shown in Section \ref{subsec:intro} and \ref{subsec:decoupling} that the following conditions (partially matching the assumptions (A1)-(A4) in \cite{lihua20}) hold with $\delta=n_1^{-q}$ for some constant $q>0$.

\begin{condition}\label{cond:c1} There exists a constant $C_1>0$ such that with probability at least $1-O(n_1^{-q})$ the following conditions hold \begin{enumerate}
    \item $\norm{B^{(i)}-B}\leq L_1(\delta):= C_1\sqrt{n_1n_2p_{max}^2}$,
    \item  $\frac{ \norm{(B^{(i)}-B)U)}}{\lambda_r^*}\leq C_1 \norm{U}_{2\to \infty}$.
\end{enumerate}
 With the notations of \cite{lihua20}, the functions $L_2(\delta),L_3(\delta)$ and $\kappa(\Lambda^*)=\bar{\kappa}$ that appears in Assumption (A1) are $O(1)$.
\end{condition}

\begin{condition}\label{cond:c2}
    There exists a constant $C_2>0$ such that with probability at least $1-O(n_1^{-q})$ the following inequalities hold
    \begin{enumerate}
            \item $\norm{\Lambda -\Lambda^*}\leq \lambda_-(\delta):=C_2 \sqrt{n_1n_2p_{max}^2}$,
            \item $\norm{EU^*}\leq E_+(\delta):=C_2 \sqrt{n_1n_2p_{max}^2}$,
            \item $\norm{E}_{2\to \infty} \leq E_\infty(\delta)=C_2 \sqrt{n_1n_2p_{max}^2}$.
    \end{enumerate}
\end{condition}

\begin{condition}\label{cond:c3}
    For any $i\in [n_1]$ and fixed matrix $W\in \R^{n_1\times r}$, \[ \norm{E_{i:}W}\leq b_{\infty}(\delta)\norm{W}_{2\to \infty}+b_2(\delta)\norm{W}, \text{ with probability at least } 1-O(n_1^{-q})\] where $b_\infty(\delta)\lesssim \frac{\log n_1}{\log \log n_1}$ and $b_2(\delta) \lesssim \frac{\sqrt{n_2}p_{max}\log n_1}{\log \log n_1}$.
\end{condition}

\begin{condition}\label{cond:c4}
    We have $\Delta^*\geq 4(\sigma(\delta)+L_1(\delta)+\lambda_-(\delta))$ where $\sigma(\delta)=E_\infty(\delta) + b_\infty(\delta)+b_2(\delta)+E_+(\delta)$.
\end{condition}

\begin{theorem}\label{thm:lihua_ext} Let $\delta=n_1^{-q}$ for some constant $q>0$. Then under conditions C1-C4 and the assumptions of Theorem \ref{thm:main}, there exists a constant $C_3>0$ such that with probability at least $1-O(n_1^{-q})$ \begin{align*}
    d_{2\to \infty}(U,BU^*(\Lambda^*)^{-1}) \leq &\frac{C_3}{\Delta^*}\sigma(\delta)\left( \norm{U^*}_{2\to \infty}+\frac{\norm{EU^*}_{2\to \infty}}{\lambda_{min}^*}\right)\\
    &+ \frac{C_3}{\Delta^*} \left(\frac{E_+(\delta)b_2(\delta)}{\lambda_{min}^*}+ \frac{E_+(\delta)}{\sqrt{n_1}}\right) .
\end{align*} 
\end{theorem}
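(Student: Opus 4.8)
The plan is to transplant the three-step proof of Theorem~2.3 in \cite{lihua20} onto our setting, using Conditions~\ref{cond:c1}--\ref{cond:c4} in place of their assumptions (A1)--(A4), and replacing the single stochastic ingredient that fails for us---the total-variation decoupling in (A1)---by the leave-one-out bound already established in Lemma~\ref{lem:4}. The starting point is the eigen-identity $U = BU\Lambda^{-1}$, written row by row as $U_{i:} = B_{i:}U\Lambda^{-1}$, which we compare against the $i$-th row of the one-step approximation $BU^*(\Lambda^*)^{-1}$. Fixing the orthogonal matrix $H$ that best aligns $U$ with $U^*$, the object to control is $\max_i \norm{(UH - BU^*(\Lambda^*)^{-1})_{i:}}$, whose supremum over $i$ is exactly $d_{2\to\infty}(U,BU^*(\Lambda^*)^{-1})$.

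First I would invoke the deterministic portion of the argument. Steps~I and~II of the proof of Theorem~2.3 in \cite{lihua20} use only matrix perturbation theory (Weyl's inequality together with a Davis--Kahan / $\sin\Theta$ bound) and the operator-norm inputs $\norm{\Lambda - \Lambda^*}$, $\norm{EU^*}$, $\norm{E}_{2\to\infty}$ and the leave-one-out stability of $B^{(i)}$; none of these invokes independence, so they carry over verbatim once Conditions~\ref{cond:c1} and~\ref{cond:c2} and the eigengap Condition~\ref{cond:c4} are in force. The outcome is a master inequality whose leading contribution is precisely $\frac{C_3}{\Delta^*}\sigma(\delta)\bigl(\norm{U^*}_{2\to\infty} + \norm{EU^*}_{2\to\infty}/\lambda_{min}^*\bigr)$, while the residual reduces---after the triangle inequality $U^{(i)}H^{(i)} - U^* = (U^{(i)}H^{(i)} - UH) + (UH - U^*)$---to controlling $\max_i \norm{E_{i:}(U^{(i)}H^{(i)} - U^*)}/\lambda_{min}^*$.

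Next I would execute the one stochastic step. Setting $W^{(i)} = U^{(i)}H^{(i)} - U^*$, the key observation is that $W^{(i)}$ is a function of $B^{(i)}$ alone and hence does not involve the $i$-th row $E_{i:}$ of the noise; this is exactly the hypothesis of Lemma~\ref{lem:4}, which yields $\norm{E_{i:}W^{(i)}} \lesssim b_\infty(\delta)\norm{W^{(i)}}_{2\to\infty} + b_2(\delta)\norm{W^{(i)}}$ with the $b_\infty,b_2$ of Condition~\ref{cond:c3}. I would then feed in the $\sin\Theta$ estimate $\norm{W^{(i)}} \lesssim E_+(\delta)/\Delta^*$ to turn the $b_2(\delta)\norm{W^{(i)}}$ term into the lower-order contribution $E_+(\delta)b_2(\delta)/\lambda_{min}^*$, and bound $\norm{W^{(i)}}_{2\to\infty} \leq \norm{U}_{2\to\infty} + \norm{U^*}_{2\to\infty} + \norm{UH - U^{(i)}H^{(i)}}$, where the last term is driven back into the $E_+(\delta)/\sqrt{n_1}$ piece via the leave-one-out perturbation bound from Condition~\ref{cond:c1}(2).

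The hard part will be closing the resulting self-referential inequality: the residual carries a factor proportional to $b_\infty(\delta)$ multiplying $\norm{U}_{2\to\infty}$, the very quantity being bounded. This is where Condition~\ref{cond:c4}, $\Delta^* \geq 4(\sigma(\delta) + L_1(\delta) + \lambda_-(\delta))$, does the work: since $\Delta^* = \lambda_{min}^*$ and $\sigma(\delta) \geq b_\infty(\delta)$, it forces the coefficient of $\norm{U}_{2\to\infty}$ on the right-hand side below $\tfrac14$, so the inequality can be solved by absorbing the recursive term into the left-hand side, producing the stated bound with a fresh constant $C_3$. I expect the only genuinely delicate bookkeeping---beyond citing \cite{lihua20}---to be checking that every term generated by substituting $b_\infty(\delta) \lesssim \log n_1/\log\log n_1$ and $b_2(\delta) \lesssim \sqrt{n_2}p_{max}\log n_1/\log\log n_1$ is, under $n_1 n_2 p_{max}^2 \geq C\log n_1$ and $n_2 p_{max}^2 = o(1)$, dominated by one of the two displayed groups, so that no spurious factor survives.
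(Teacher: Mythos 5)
Your proposal is correct and follows essentially the same route as the paper: the paper's proof likewise observes that Steps I--II of Theorem~2.3 in \cite{lihua20} are deterministic and carry over under Conditions~\ref{cond:c1}--\ref{cond:c4}, and that the total-variation decoupling in (A1) is needed only in Step~III to control $\norm{E_{i:}(U^{(i)}H^{(i)}-U^*)}$, which it replaces by the leave-one-out bound of Lemma~\ref{lem:4} (equation~\eqref{eq:step3}), exactly as you do. Your write-up merely spells out more of the internal bookkeeping of \cite{lihua20}'s Step~III (the $\sin\Theta$ estimates and the absorption of the recursive $\norm{U}_{2\to\infty}$ term via Condition~\ref{cond:c4}), which the paper leaves implicit by citation.
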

\begin{proof}
We cannot directly apply Theorem 2.3 in \cite{lihua20} because Condition C1 doesn't include the condition stated in \eqref{eq:a1}. But this condition is only used in the Step III. of Theorem 2.3 where one needs to control $\norm{E_{i:}(U^{(i)}H^{(i)}-U^*)}$. We used a different argument to control this quantity in Section \ref{subsec:decoupling} and we obtained by equation \eqref{eq:step3} \[ \norm{E_{i:}(U^{(i)}H^{(i)}-U^*)}\leq  b_\infty(\delta)\norm{(U^{(i)}H^{(i)}-U^*)}_{2\to \infty}+ b_2(\delta) \norm{(U^{(i)}H^{(i)}-U^*)}.\] This concludes Step III in Theorem 2.3 in \cite{lihua20}.
\end{proof}

\begin{corollary}  Under the same assumption as in Theorem \ref{thm:lihua_ext}, there is a constant $c>0$ (possibly depending on $q$) such that with probability at least $1-O(n_1^{-q})$ \[ d_{2\to \infty}(U,U^*) \leq \frac{c}{\sqrt{n_1}}.\]
\end{corollary}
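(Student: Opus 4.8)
The plan is to transfer the bound that Theorem~\ref{thm:lihua_ext} already gives on $d_{2\to\infty}(U,BU^*(\Lambda^*)^{-1})$ to the population basis $U^*$ by a single triangle inequality, and then to check that every term produced has order $1/\sqrt{n_1}$ with a constant that shrinks as $C$ grows. First I would write, for any orthogonal $O$,
\[ \norm{UO-U^*}_{2\to\infty}\leq \norm{UO-BU^*(\Lambda^*)^{-1}}_{2\to\infty}+\norm{BU^*(\Lambda^*)^{-1}-U^*}_{2\to\infty}, \]
and take the infimum over $O$ to obtain $d_{2\to\infty}(U,U^*)\leq d_{2\to\infty}(U,BU^*(\Lambda^*)^{-1})+\norm{BU^*(\Lambda^*)^{-1}-U^*}_{2\to\infty}$. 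Since $B^*=PP^\top$ has rank $r$ with $B^*U^*=U^*\Lambda^*$, I have the exact identity $B^*U^*(\Lambda^*)^{-1}=U^*$, so that $BU^*(\Lambda^*)^{-1}-U^*=EU^*(\Lambda^*)^{-1}$ and hence
\[ \norm{BU^*(\Lambda^*)^{-1}-U^*}_{2\to\infty}\leq \frac{\norm{EU^*}_{2\to\infty}}{\lambda_{min}^*}. \]
Everything then reduces to showing that this residual and the whole right-hand side of Theorem~\ref{thm:lihua_ext} are $\lesssim 1/\sqrt{n_1}$.

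The two structural inputs I would record are that, by Assumptions~\ref{ass:balanced_part}--\ref{ass:eig_low_bd} and the block structure of $P$, one has $\Delta^*=\lambda_{min}^*\gtrsim n_1n_2p_{max}^2$ and $\norm{U^*}_{2\to\infty}\lesssim 1/\sqrt{n_1}$ (the rows of $U^*$ are constant on communities of size $\asymp n_1/K$). For the residual I would apply Condition~\ref{cond:c3} to the fixed deterministic matrix $W=U^*$, giving
\[ \norm{EU^*}_{2\to\infty}\leq b_\infty(\delta)\norm{U^*}_{2\to\infty}+b_2(\delta)\norm{U^*}\lesssim \frac{b_\infty(\delta)}{\sqrt{n_1}}+b_2(\delta). \]
Dividing by $\lambda_{min}^*\gtrsim n_1n_2p_{max}^2$ and inserting $b_\infty\lesssim \frac{\log n_1}{\log\log n_1}$ and $b_2\lesssim \frac{\sqrt{n_2}p_{max}\log n_1}{\log\log n_1}$, each summand is $\tfrac{1}{\sqrt{n_1}}$ times a factor that the hypotheses $n_1n_2p_{max}^2\geq C\log n_1$ and $n_2p_{max}^2=o(1)$ push below an arbitrarily small constant for $C$ large; this is where the near-optimal signal-to-noise ratio is really used.

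It then remains to do the bookkeeping on the right-hand side of Theorem~\ref{thm:lihua_ext}. With $\Delta^*=\lambda_{min}^*\gtrsim n_1n_2p_{max}^2$ and $E_+\asymp E_\infty\asymp\sqrt{n_1n_2p_{max}^2}$, the prefactor $\sigma(\delta)/\Delta^*$ is $o(1)$, so the first line of the theorem is at most $\frac{\sigma(\delta)}{\Delta^*}\bigl(\norm{U^*}_{2\to\infty}+\frac{\norm{EU^*}_{2\to\infty}}{\lambda_{min}^*}\bigr)\lesssim \frac{1}{\sqrt{n_1}}$ by the previous step. For the second line, substituting the orders of $E_+$, $b_2$ and $\lambda_{min}^*$ into $\frac{E_+(\delta)b_2(\delta)}{\Delta^*\lambda_{min}^*}$ and $\frac{E_+(\delta)}{\Delta^*\sqrt{n_1}}$ and using $n_1n_2p_{max}^2\geq C\log n_1$ shows both are $\lesssim \frac{1}{\sqrt{n_1}}$ with a constant tending to $0$ as $C\to\infty$. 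Summing the $O(1)$ many terms yields $d_{2\to\infty}(U,U^*)\leq c/\sqrt{n_1}$ with $c$ arbitrarily small, on the intersection of the events of Conditions~\ref{cond:c1}--\ref{cond:c4}, of probability $1-O(n_1^{-q})$.

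The hard part will be the residual $\frac{\norm{EU^*}_{2\to\infty}}{\lambda_{min}^*}$ (and its analogue $\frac{E_+(\delta)b_2(\delta)}{(\lambda_{min}^*)^2}$). The naive spectral bound $\norm{EU^*}_{2\to\infty}\leq\norm{E}_{2\to\infty}\asymp\sqrt{n_1n_2p_{max}^2}$ only gives $\frac{1}{\sqrt{n_1n_2p_{max}^2}}=\frac{1}{\sqrt{n_1}}\cdot\frac{1}{\sqrt{n_2p_{max}^2}}$, which is $\gg 1/\sqrt{n_1}$ precisely because $n_2p_{max}^2=o(1)$; the gain must come from the delocalization of $U^*$, so that $E_{i:}U^*$ is an average over a spread-out weight vector rather than a full row norm, which is exactly what Condition~\ref{cond:c3} encodes. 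The one genuine subtlety is then to verify that the logarithmic factor $\frac{\log n_1}{\log\log n_1}$ carried by $b_2$ is dominated by the eigengap, i.e.\ that $\frac{\log n_1}{\log\log n_1}\lesssim\sqrt{n_1n_2p_{max}^2}$ throughout the operating regime; when this comparison is tight one should sharpen the tail of $E_{i:}U^*$ to its sub-Gaussian $\sqrt{\log n_1}$ form so that the residual is comfortably $\lesssim 1/\sqrt{Cn_1}$.
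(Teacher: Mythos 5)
Your skeleton is the same as the paper's: the triangle inequality through $BU^*(\Lambda^*)^{-1}$, the exact identity $U^*=B^*U^*(\Lambda^*)^{-1}$ so that the residual is bounded by $\norm{EU^*}_{2\to\infty}/\lambda_{min}^*$, and then bookkeeping on the right-hand side of Theorem~\ref{thm:lihua_ext}. The gap is in how you bound the residual. Applying Condition~\ref{cond:c3} with $W=U^*$ produces the term $b_2(\delta)\norm{U^*}\lesssim \sqrt{n_2p_{max}^2}\,\frac{\log n_1}{\log\log n_1}$, and after dividing by $\lambda_{min}^*\gtrsim n_1n_2p_{max}^2$ this contributes
\[
\frac{b_2(\delta)}{\lambda_{min}^*}\;\asymp\;\frac{1}{\sqrt{n_1}}\cdot\frac{\log n_1}{\log\log n_1\,\sqrt{n_1n_2p_{max}^2}}.
\]
Under the hypothesis $n_1n_2p_{max}^2\geq C\log n_1$ the second factor is only bounded by $\frac{\sqrt{\log n_1}}{\sqrt{C}\,\log\log n_1}$, which diverges with $n_1$ no matter how large $C$ is chosen. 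So your second paragraph's claim that each summand is $\frac{1}{\sqrt{n_1}}$ times a factor "pushed below an arbitrarily small constant for $C$ large'' is false precisely in the critical regime $n_1n_2p_{max}^2\asymp\log n_1$, which is the regime the whole paper targets. (The superficially similar theorem term $\frac{E_+(\delta)b_2(\delta)}{\Delta^*\lambda_{min}^*}$ is fine, because the extra factor $E_+(\delta)/\Delta^*\lesssim 1/\sqrt{C\log n_1}$ absorbs the stray $\sqrt{\log n_1}$; the problem is confined to the residual, where only one power of the gap is available.)

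You do flag this in your last paragraph, but the comparison you say "remains to verify,'' namely $\frac{\log n_1}{\log\log n_1}\lesssim\sqrt{n_1n_2p_{max}^2}$, is simply not implied by the assumptions, so this is not a peripheral subtlety to check but the crux of the corollary, and your proposal leaves it unproved. The paper resolves it by \emph{not} using Condition~\ref{cond:c3} for this term: it re-runs the row-wise argument of Lemma~\ref{lem:lem_fond} (item 4) specifically for $W=U^*$, exploiting the delocalization $\norm{U^*}_{2\to\infty}\lesssim 1/\sqrt{n_1}$ inside the Bernstein-type bound (as in Lemma 3.3 of \cite{lihua20}), which yields the sharper estimate $\norm{EU^*}_{2\to\infty}\lesssim \log n_1\,\norm{U^*}_{2\to\infty}+\sqrt{p_{max}\log n_1}$ and hence $\frac{\norm{EU^*}_{2\to\infty}}{\lambda_{min}^*}\lesssim \frac{\log n_1}{n_1n_2p_{max}^2}\cdot\frac{1}{\sqrt{n_1}}\leq \frac{1}{C\sqrt{n_1}}$. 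This is exactly the "sub-Gaussian sharpening'' you gesture at in your final sentence, but carrying it out is the substance of the proof; as written, your argument does not establish the stated $c/\sqrt{n_1}$ bound.
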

\begin{proof}
By triangular inequality \[ d_{2\to \infty}(U,U^*)\leq d_{2\to \infty}(U,BU^*(\Lambda^*)^{-1})+d_{2\to \infty}(BU^*(\Lambda^*)^{-1},U^*). \] Notice that $U^*=B^*U^*(\Lambda^*)^{-1}$, so \[ d_{2\to \infty}(BU^*(\Lambda^*)^{-1},U^*) \leq \norm{(B-B^*)U^*(\Lambda^*)^{-1}}_{2\to \infty} \leq \frac{\norm{EU^*}_{2\to \infty}}{\lambda_{min}^*}.\]
%
We can bound $\norm{EU^*}_{2\to \infty}$ by using the same proof technique as in Lemma \ref{lem:lem_fond}, bullet 4, similarly to Lemma 3.3. in \cite{lihua20}. We obtain with probability at least $1-n_1^{-q}$ \[ \norm{EU^*}_{2\to \infty} \lesssim \log n_1 \norm{U^*}_{2\to \infty}+ \sqrt{p_{max}\log n_1}. \] Hence with probability at least $1-n_1^{-q}$
\[\frac{\norm{EU^*}_{2\to \infty}}{\lambda_{min}^*}\lesssim \frac{\log n_1}{n_1n_2p_{max}^2}\frac{1}{\sqrt{n_1}}.\]
It is easy to check that \begin{align*}
    &\sigma(\delta) = O(\frac{\log n_1}{\log \log n_1})\\
    &\frac{E_+(\delta)b_2(\delta)}{\Delta^*\lambda_{min}^*}=O(\frac{1}{\sqrt{n_1}\log \log n_1}) \\
    &\frac{E_+(\delta)}{\Delta^*\sqrt{n_1}}=O(\frac{1}{\sqrt{\log n_1}}\frac{1}{\sqrt{n_1}}).
\end{align*}

By consequence, triangular inequality and Theorem \ref{thm:lihua_ext} implies that w.h.p. \[d_{2\to \infty}(U,U^*) \leq \frac{c}{\sqrt{n_1}} \] for a constant $c>0$ that can be made small enough if the constant $C$ such that $n_1n_2p_{max}^2\geq C \log n_1$ is chosen large enough.

\end{proof}

\subsection{Proof of Corollary \ref{cor:1}}
The proof is standard, but for completeness, we outline it. First, we need to relate the $\texttt{k-medians}$ algorithm with the $\ell_{2\to \infty}$ perturbation bounds. It can be done by the following lemma.
\begin{lemma}[\cite{lihua20}]\label{lem:k-medians}
 Let $U, U^*\in \R^{n\times r}$ be two matrices with orthonormal columns. Then the $\texttt{k-medians}$ algorithm exactly recovers the clusters $\calC_1,\ldots, \calC_K$ if \[ d_{2\to \infty}(U,U^*)\leq \frac{1}{6\alpha}\min_{i,j\in [n_1]: z_i\neq z_j} \norm{U^*_{i:}-U^*_{j:}}.\]
\end{lemma}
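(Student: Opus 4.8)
The plan is to follow the reduction used in \cite{lihua20} that converts an $\ell_{2\to\infty}$ eigenspace bound into an exact misclustering guarantee, keeping track of where the balance constant $\alpha$ and the separation enter. Write $\gamma:=d_{2\to\infty}(U,U^*)$ and $\Delta:=\min_{z_i\ne z_j}\norm{U^*_{i:}-U^*_{j:}}$, so the hypothesis reads $\gamma\le \Delta/(6\alpha)$. First I would fix the orthogonal matrix $O$ achieving the infimum in $d_{2\to\infty}$ and set $V:=UO$, so that $\norm{V_{i:}-U^*_{i:}}\le\gamma$ for every $i\in[n_1]$. Since $B^*=PP^\top=Z_1\Pi\Pi^\top Z_1^\top$ is block constant along the partition, the rows of $U^*$ are constant within each community; hence $U^*$ has exactly $K$ distinct rows $\mu_1,\dots,\mu_K$, pairwise separated by at least $\Delta$. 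Because the Euclidean distance is invariant under the orthogonal map $O$, running \texttt{k-medians} on the rows of $U$ is identical to running it on the rows of $V$, so I analyse $V$ throughout.

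Next I would exploit the approximation guarantee. Let $(\hat z,\hat\theta_1,\dots,\hat\theta_K)$ be the output assignment and centers. Using the $K$ rows of $U^*$ as a feasible set of centers gives the benchmark bound \[ \mathrm{OPT}\le \sum_{i\in[n_1]}\norm{V_{i:}-U^*_{i:}}\le n_1\gamma, \] so the $(1+2/e+\epsilon)$-approximation yields the total cost bound \[ \sum_{i\in[n_1]}\norm{V_{i:}-\hat\theta_{\hat z(i)}}\le (1+2/e+\epsilon)\,n_1\gamma<2n_1\gamma, \] the last step using $2/e<0.74$ and $\epsilon$ small.

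The core of the proof is then a two-step passage from ``small total cost'' to ``zero errors''. Declaring $i$ bad when $\norm{\hat\theta_{\hat z(i)}-U^*_{i:}}\ge\Delta/2$, the triangle inequality and the cost bound give $|\{\text{bad }i\}|\,(\Delta/2-\gamma)\le 2n_1\gamma$, and since $\gamma\le\Delta/6$ this forces at most $O(n_1\gamma/\Delta)$ bad nodes. Here the balance Assumption \ref{ass:balanced_part} (each $\calC_k$ has at least $n_1/(\alpha K)$ nodes) is what makes the count decisive: if some true center $\mu_l$ had no estimated center within $\Delta/2$, then every one of the $\ge n_1/(\alpha K)$ points of $\calC_l$ would be bad, contradicting the cost bound once $\gamma<\Delta/(6\alpha)$ (up to the constant $K$, treated as fixed). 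This pins down a bijection $\pi$ between true centers and used labels; a short robustness argument then shows that because a strict majority of the points assigned to each label lie within $\gamma$ of a single $\mu_l$, the corresponding centroid is in fact within $O(\gamma)$ of $\mu_l$. With the centers $O(\gamma)$-close to distinct, $\Delta$-separated true centers and each row $V_{i:}$ within $\gamma$ of its own $\mu_{z_i}$, the nearest estimated center to $V_{i:}$ is necessarily $\hat\theta_{\pi^{-1}(z_i)}$, so $\hat z=\pi\circ z$ and no node is misclassified.

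The step I expect to be the \emph{main obstacle} is precisely this last passage from a bounded number of errors to exactly zero: the naive count only produces $O(n_1\gamma/\Delta)$ misclassified nodes, which can exceed a cluster's size, so one must combine the balance assumption (to force the center-to-center bijection) with a robustness estimate showing each \texttt{k-medians} centroid lands within $O(\gamma)$—not merely $\Delta/2$—of a true centroid, after which the per-node nearest-center comparison closes. The constant $1/(6\alpha)$ is tuned exactly so that $\Delta/2-\gamma\ge\Delta/3$ and the per-cluster cost lower bound beats the approximate total cost. Since this is a restatement of the corresponding lemma in \cite{lihua20}, an alternative is simply to verify that our $U^*$ satisfies their hypotheses (orthonormal columns, block-constant rows) and invoke their result directly.
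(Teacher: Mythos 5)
The paper never proves this lemma: it is imported wholesale from \cite{lihua20} (it is their k-medians lemma, used here as a black box in Corollary \ref{cor:1}), so the fallback you offer in your last sentence is exactly what the paper does, and your reconstruction must be judged on its own merits. It has the right skeleton (benchmark cost $\mathrm{OPT}\le n_1\gamma$, approximation factor, counting of bad nodes, balance forcing a centre-to-centre bijection, nearest-centre conclusion), but the step you parenthesised as ``up to the constant $K$, treated as fixed'' is precisely where it breaks, and it is not benign bookkeeping. Your count gives at most $2n_1\gamma/(\Delta/2-\gamma)\le 6n_1\gamma/\Delta$ bad nodes, and the bijection needs this to be smaller than the minimum cluster size, which Assumption \ref{ass:balanced_part} only guarantees to be $n_1/(\alpha K)$. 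Under the stated hypothesis $\gamma\le\Delta/(6\alpha)$ the count is only bounded by $n_1/\alpha$, which is at least $n_1/(\alpha K)$ as soon as $K\ge 2$, so the counting pins down nothing. For the argument to close one needs $\gamma\lesssim \Delta\,n_{\min}/n_1\asymp\Delta/(\alpha K)$, i.e.\ the threshold must carry the minimum cluster-size fraction. This is unavoidable for any proof that uses only the cost bound, the balance, and the separation: with exactly balanced clusters, $\gamma=\Delta/6$ and $K$ large, a solution that merges two clusters and splits a third has cost at most $(1+3/K)\,\mathrm{OPT}<(1+2/e)\,\mathrm{OPT}$, so it sits inside the approximation budget while misclustering a constant fraction of nodes. (None of this harms the paper: in Corollary \ref{cor:1} the perturbation $c/\sqrt{n_1}$ can be made arbitrarily small, so a threshold of $\Delta/(6\alpha K)$ serves equally well; but then the constant you prove is not the constant stated.)

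The second soft spot is the ``robustness'' step: a bare strict majority of points inside a $\gamma$-ball does not force a $1$-median (or any near-optimal centre) to lie within $O(\gamma)$ of $\mu_l$; median-type robustness only bounds the displacement by the diameter of the remaining points divided by the majority margin, and that diameter is of order $\Delta$ or larger here, so the conclusion you need does not follow. The clean repair --- which removes the need for any robustness statement --- is to run your counting argument at radius $\Delta/2-2\gamma$ instead of $\Delta/2$: if no estimated centre lies within $\Delta/2-2\gamma$ of $\mu_l$, then every node of $\calC_l$ pays at least $\Delta/2-3\gamma$, contradicting the total cost bound once $\gamma\le\Delta/(C\alpha K)$ for a suitable absolute constant $C$. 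Each $\mu_l$ then has a matched centre $\hat\theta_{\pi(l)}$ with $\norm{\hat\theta_{\pi(l)}-\mu_l}\le\Delta/2-2\gamma$, unique by the $\Delta$-separation, and for $i\in\calC_l$ and $l'\ne l$ the triangle inequality gives $\norm{V_{i:}-\hat\theta_{\pi(l)}}\le\Delta/2-\gamma<\Delta/2+\gamma\le\norm{V_{i:}-\hat\theta_{\pi(l')}}$, so the nearest-centre assignment makes no mistake and recovery is exact.
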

Since by Theorem \ref{thm:main} we have $d_{2\to \infty}(U,U^*)\leq \frac{c}{\sqrt{n_1}}$ and by Assumption \ref{ass:eig_low_bd} $\min_{i,j\in [n_1]: z_i\neq z_j} \norm{U^*_{i:}-U^*_{j:}}\geq \frac{c_1}{\sqrt{n_1}}$, the assumption of Lemma \ref{lem:k-medians} holds whenever $c_1/6\alpha > c$.

\subsection{Proof of Corollary \ref{cor:2}}
It is sufficient to show that w.h.p. we have $\hat{r}=r$. But this is a straightforward consequence of Weyl's inequality and the fact that $\norm{B-B^*}\lesssim \sqrt{n_1n_2p_{max}^2}$.

\clearpage
\paragraph{Acknowledgement} The work leading to the preliminary version of this manuscript was done while G.B was a PhD student at Inria Lille in the MODAL team. G.B. would like to thank Hemant Tyagi for giving feedback on the preliminary version of the manuscript.  Special appreciation is also owed to Yizhe Zhu for diligently pinpointing inaccuracies within the proofs presented in the preceding version of the manuscript.
\bibliography{references}

\begin{thebibliography}{31}
\providecommand{\natexlab}[1]{#1}
\providecommand{\url}[1]{\texttt{#1}}
\expandafter\ifx\csname urlstyle\endcsname\relax
  \providecommand{\doi}[1]{doi: #1}\else
  \providecommand{\doi}{doi: \begingroup \urlstyle{rm}\Url}\fi

\bibitem[Abbe(2018)]{AbbeSBM}
E.~Abbe.
\newblock Community detection and stochastic block models.
\newblock \emph{Foundations and Trends® in Communications and Information
  Theory}, 14\penalty0 (1-2):\penalty0 1--162, 2018.

\bibitem[Abbe et~al.(2020{\natexlab{a}})Abbe, Fan, and Wang]{abbe2020ellp}
E.~Abbe, J.~Fan, and K.~Wang.
\newblock An $\ell_p$ theory of pca and spectral clustering,
  2020{\natexlab{a}}.

\bibitem[Abbe et~al.(2020{\natexlab{b}})Abbe, Fan, Wang, and
  Zhong]{Abbe2020ENTRYWISEEA}
E.~Abbe, J.~Fan, K.~Wang, and Y.~Zhong.
\newblock Entrywise eigenvector analysis of random matrices with low expected
  rank.
\newblock \emph{Annals of statistics}, 48\penalty0 (3):\penalty0 1452--1474,
  2020{\natexlab{b}}.

\bibitem[Braun and Tyagi(2022)]{braun2022minimax}
G.~Braun and H.~Tyagi.
\newblock Minimax optimal clustering of bipartite graphs with a generalized
  power method, 2022.

\bibitem[Cai et~al.(2021)Cai, Li, Chi, Poor, and Chen]{sub_est21}
C.~Cai, G.~Li, Y.~Chi, H.~V. Poor, and Y.~Chen.
\newblock {Subspace estimation from unbalanced and incomplete data matrices:
  ${\ell _{2,\infty }}$ statistical guarantees}.
\newblock \emph{The Annals of Statistics}, 49\penalty0 (2):\penalty0 944 --
  967, 2021.
\newblock \doi{10.1214/20-AOS1986}.
\newblock URL \url{https://doi.org/10.1214/20-AOS1986}.

\bibitem[Cape et~al.(2019{\natexlab{a}})Cape, Tang, and Priebe]{cape2019a}
J.~Cape, M.~Tang, and C.~E. Priebe.
\newblock Signal-plus-noise matrix models: eigenvector deviations and
  fluctuations.
\newblock \emph{Biometrika}, 106\penalty0 (1):\penalty0 243–250, Jan
  2019{\natexlab{a}}.

\bibitem[Cape et~al.(2019{\natexlab{b}})Cape, Tang, and Priebe]{cape2019b}
J.~Cape, M.~Tang, and C.~E. Priebe.
\newblock The two-to-infinity norm and singular subspace geometry with
  applications to high-dimensional statistics.
\newblock \emph{The Annals of Statistics}, 47\penalty0 (5), Oct
  2019{\natexlab{b}}.

\bibitem[Chen et~al.(2019)Chen, Fan, Ma, and Wang]{SpecRank2019}
Y.~Chen, J.~Fan, C.~Ma, and K.~Wang.
\newblock Spectral method and regularized mle are both optimal for top-$k$
  ranking.
\newblock \emph{The Annals of Statistics}, 47\penalty0 (4), Aug 2019.
\newblock ISSN 0090-5364.
\newblock \doi{10.1214/18-aos1745}.
\newblock URL \url{http://dx.doi.org/10.1214/18-aos1745}.

\bibitem[Cohen-Addad et~al.(2019)Cohen-Addad, Gupta, Kumar, Lee, and
  Li]{approx_kmedians}
V.~Cohen-Addad, A.~Gupta, A.~Kumar, E.~Lee, and J.~Li.
\newblock {Tight FPT Approximations for k-Median and k-Means}.
\newblock In \emph{{46th International Colloquium on Automata, Languages, and
  Programming (ICALP 2019)}}, volume 132 of \emph{Leibniz International
  Proceedings in Informatics}, pages 42:1--42:14, 2019.

\bibitem[Damle and Sun(2020)]{sun2020}
A.~Damle and Y.~Sun.
\newblock Uniform bounds for invariant subspace perturbations.
\newblock \emph{SIAM Journal on Matrix Analysis and Applications}, 41\penalty0
  (3):\penalty0 1208–1236, Jan 2020.

\bibitem[d'Aspremont et~al.(2021)d'Aspremont, Cucuringu, and Tyagi]{tyagi21}
A.~d'Aspremont, M.~Cucuringu, and H.~Tyagi.
\newblock Ranking and synchronization from pairwise measurements via svd.
\newblock \emph{Journal of Machine Learning Research}, 22\penalty0
  (19):\penalty0 1--63, 2021.
\newblock URL \url{http://jmlr.org/papers/v22/19-542.html}.

\bibitem[Dhara et~al.(2022{\natexlab{a}})Dhara, Gaudio, Mossel, and
  Sandon]{Dhara2022SpectralRO}
S.~Dhara, J.~Gaudio, E.~Mossel, and C.~Sandon.
\newblock Spectral recovery of binary censored block models.
\newblock In \emph{SODA}, 2022{\natexlab{a}}.

\bibitem[Dhara et~al.(2022{\natexlab{b}})Dhara, Gaudio, Mossel, and
  Sandon]{dhara2022spectral}
S.~Dhara, J.~Gaudio, E.~Mossel, and C.~Sandon.
\newblock Spectral algorithms optimally recover planted sub-structures,
  2022{\natexlab{b}}.

\bibitem[Eldridge et~al.(2018)Eldridge, Belkin, and Wang]{eldridge18a}
J.~Eldridge, M.~Belkin, and Y.~Wang.
\newblock Unperturbed: spectral analysis beyond davis-kahan.
\newblock In \emph{Proceedings of Algorithmic Learning Theory}, volume~83 of
  \emph{Proceedings of Machine Learning Research}, pages 321--358, 07--09 Apr
  2018.

\bibitem[Fan et~al.(2016)Fan, Wang, and Zhong]{Fan2016AnE}
J.~Fan, W.~Wang, and Y.~Zhong.
\newblock An $\ell_{\infty}$ eigenvector perturbation bound and its
  application.
\newblock \emph{J. Mach. Learn. Res.}, 18:\penalty0 207:1--207:42, 2016.

\bibitem[Feige and Ofek(2005)]{freige05}
U.~Feige and E.~Ofek.
\newblock Spectral techniques applied to sparse random graphs.
\newblock \emph{Random Structures \& Algorithms}, 27\penalty0 (2):\penalty0
  251--275, 2005.

\bibitem[Florescu and Perkins(2016)]{florescu16}
L.~Florescu and W.~Perkins.
\newblock Spectral thresholds in the bipartite stochastic block model.
\newblock In \emph{29th Annual Conference on Learning Theory}, volume~49, pages
  943--959, 2016.

\bibitem[Gaudio and Joshi(2022)]{gaudio2022community}
J.~Gaudio and N.~Joshi.
\newblock Community detection in the hypergraph sbm: Optimal recovery given the
  similarity matrix, 2022.

\bibitem[Holland et~al.(1983)Holland, Laskey, and Leinhardt]{HOLLAND1983109}
P.~W. Holland, K.~B. Laskey, and S.~Leinhardt.
\newblock Stochastic blockmodels: First steps.
\newblock \emph{Social Networks}, 5\penalty0 (2):\penalty0 109 -- 137, 1983.

\bibitem[Huang et~al.(2007)Huang, Zeng, and Chen]{customer-product}
Z.~Huang, D.~D. Zeng, and H.~Chen.
\newblock Analyzing consumer-product graphs: Empirical findings and
  applications in recommender systems.
\newblock \emph{Management Science}, 53\penalty0 (7):\penalty0 1146--1164,
  2007.

\bibitem[Lee et~al.(2020)Lee, Kim, and Chung]{RobHyp2020}
J.~Lee, D.~Kim, and H.~W. Chung.
\newblock Robust hypergraph clustering via convex relaxation of truncated mle.
\newblock \emph{IEEE Journal on Selected Areas in Information Theory},
  1\penalty0 (3):\penalty0 613–631, 2020.

\bibitem[Lei and Rinaldo(2015)]{lei2015}
J.~Lei and A.~Rinaldo.
\newblock Consistency of spectral clustering in stochastic block models.
\newblock \emph{Ann. Statist.}, 43\penalty0 (1):\penalty0 215--237, 2015.

\bibitem[Lei(2019)]{lihua20}
L.~Lei.
\newblock Unified $\ell_{2\rightarrow\infty}$ eigenspace perturbation theory
  for symmetric random matrices, 2019.

\bibitem[Lei et~al.(2020)Lei, Li, and Lou]{Lei2020ConsistencyOS}
L.~Lei, X.~Li, and X.~Lou.
\newblock Consistency of spectral clustering on hierarchical stochastic block
  models.
\newblock \emph{arXiv: Statistics Theory}, 2020.

\bibitem[L{\"o}ffler et~al.(2021)L{\"o}ffler, Zhang, and Zhou]{loffler21}
M.~L{\"o}ffler, A.~Y. Zhang, and H.~H. Zhou.
\newblock {Optimality of spectral clustering in the Gaussian mixture model}.
\newblock \emph{The Annals of Statistics}, 49\penalty0 (5):\penalty0 2506 --
  2530, 2021.

\bibitem[Ndaoud et~al.(2022)Ndaoud, Sigalla, and
  Tsybakov]{Ndaoud2021ImprovedCA}
M.~Ndaoud, S.~Sigalla, and A.~B. Tsybakov.
\newblock Improved clustering algorithms for the bipartite stochastic block
  model.
\newblock \emph{IEEE Transactions on Information Theory}, 68\penalty0
  (3):\penalty0 1960--1975, 2022.

\bibitem[Rudelson and Vershynin(2013)]{HS}
M.~Rudelson and R.~Vershynin.
\newblock {Hanson-Wright inequality and sub-gaussian concentration}.
\newblock \emph{Electronic Communications in Probability}, 18\penalty0
  (none):\penalty0 1 -- 9, 2013.

\bibitem[Squartini et~al.(2017)Squartini, Almog, Caldarelli, van Lelyveld,
  Garlaschelli, and Cimini]{financial-bipartite}
T.~Squartini, A.~Almog, G.~Caldarelli, I.~van Lelyveld, D.~Garlaschelli, and
  G.~Cimini.
\newblock Enhanced capital-asset pricing model for the reconstruction of
  bipartite financial networks.
\newblock \emph{Phys. Rev. E}, 96:\penalty0 032315, Sep 2017.

\bibitem[Su et~al.(2020)Su, Wang, and Zhang]{LuSC2020}
L.~Su, W.~Wang, and Y.~Zhang.
\newblock Strong consistency of spectral clustering for stochastic block
  models.
\newblock \emph{IEEE Transactions on Information Theory}, 66\penalty0
  (1):\penalty0 324–338, Jan 2020.
\newblock ISSN 1557-9654.
\newblock \doi{10.1109/tit.2019.2934157}.
\newblock URL \url{http://dx.doi.org/10.1109/tit.2019.2934157}.

\bibitem[Young et~al.(2021)Young, Valdovinos, and Newman]{plants-pollinators}
J.-G. Young, F.~Valdovinos, and M.~Newman.
\newblock Reconstruction of plant–pollinator networks from observational
  data.
\newblock \emph{Nature Communications}, 12:\penalty0 3911, 2021.

\bibitem[Zhang and Zhou(2022)]{zhang2022leaveoneout}
A.~Y. Zhang and H.~H. Zhou.
\newblock Leave-one-out singular subspace perturbation analysis for spectral
  clustering, 2022.

\end{thebibliography}
\newpage
\appendix
\section{General concentration inequalities}
In this section, we provide proofs of the lemmas stated in the main text.

\rev{
\begin{lemma}\label{lem:conc_prod_bin_gen} Assume that the assumption of Theorem \ref{thm:improved_conc} are satisfied. Let us denote $S=\sum_{i,j\in [n_1]}w_{ij}\langle A_{i:},A_{j:} \rangle$ where $w_{ii}=0$ for all $i$, and $w_{ij}=x_ix_j\indic_{(i,j)\in \calL (x)}$ where $\norm{x}=1$ and $\calL(x)$ is the set of light pairs as defined in the proof of Theorem \ref{thm:improved_conc}.  In particular, $\norm{w}\leq 1 $, $\norm{w}_\infty \leq \sqrt{\frac{n_2}{n_1}}p_{max} $. Recall that $\calE =\left\lbrace  \max_{l\in [n_2]} \sum_{i\in [n_1]} A_{il}\leq C \sqrt{\log n_1}\right\rbrace $.
We have 
\[ \prob \left(\calE\cap \lbrace|S-\expec S| \gtrsim \sqrt{n_1n_2}p_{max}\rbrace \right)\leq e^{-11n_1}.\] 
\end{lemma}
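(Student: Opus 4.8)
The plan is to turn $S$ into a sum of independent, column-indexed contributions and then run a Chernoff bound on the conditioned exponential moment. Since $\langle A_{i:},A_{j:}\rangle=\sum_{l\in[n_2]}A_{il}A_{jl}$, interchanging the two sums gives
\[
S=\sum_{l\in[n_2]}Y_l,\qquad Y_l:=\sum_{i\neq j}w_{ij}A_{il}A_{jl},
\]
where $Y_l$ depends only on the $l$-th column $A_{:l}$ of $A$. The two facts that make this decomposition useful are that the $Y_l$ are \emph{independent} across $l$, and that the conditioning event factorizes, $\calE=\bigcap_{l}\calE_l$ with $\calE_l=\{\sum_{i}A_{il}\le C\sqrt{\log n_1}\}$ measurable with respect to column $l$ alone. (This is exactly what substitutes for the ``independent entries'' hypothesis of the Feige--Ofek argument, which fails here because distinct entries of $B$ share columns of $A$.) Writing $\prob(\calE\cap\{S-\expec S>t\})\le e^{-\theta t-\theta\expec S}\prod_{l}\expec[e^{\theta Y_l}\indic_{\calE_l}]$, the whole problem reduces to controlling a single truncated per-column exponential moment $\expec[e^{\theta Y_l}\indic_{\calE_l}]$, with $t\asymp\sqrt{n_1n_2}p_{max}$.

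The quantitative targets are as follows. First, the per-column second moment is small: the dominant (matched-pair) contribution is $\sum_{i\neq j}w_{ij}^2p_{il}p_{jl}\le p_{max}^2\norm{w}_F^2\le p_{max}^2$, the remaining configurations being lower order using $\norm{w}_\infty\le\sqrt{n_2/n_1}p_{max}$, so that $\sum_l\expec[Y_l^2]\lesssim n_2p_{max}^2=:v$. If one can then show that $\sum_l\log\expec[e^{\theta Y_l}\indic_{\calE_l}]\lesssim\theta\expec S+\theta^2 v$ for all $\theta$ up to $\theta\asymp t/v$, optimizing the Chernoff exponent at $\theta\asymp t/v$ yields a bound of order $\exp(-c\,t^2/v)=\exp(-c\,n_1)$, and choosing the threshold constant $C_1$ large enough pushes $c\,n_1$ past $11n_1$. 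The lower tail is obtained by applying the same argument to $-S$, and the discrepancy between the conditional and unconditional centering, $\sum_l\expec[Y_l\indic_{\calE_l^c}]$, is negligible compared with $t$ because each $\prob(\calE_l^c)$ is super-polynomially small; both of these are routine.

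The crux --- and the step I expect to be genuinely delicate --- is the per-column exponential-moment bound itself, because a naive Bernstein/Bennett estimate is provably too weak here. On $\calE_l$ one has $|Y_l|\le\norm{w}_\infty(\sum_iA_{il})^2\le C^2\norm{w}_\infty\log n_1=:M$, but feeding this worst-case range into Bernstein only gives $e^{-\Theta(n_1/\log n_1)}$: at the relevant scale $t$ one sits in the sub-exponential regime $t>v/M$, and the spurious $\log n_1$ comes from pessimistically placing the mass of $Y_l$ at the extreme value $M$. In reality that range is attained only by atypically high-degree columns, so the true higher moments of $Y_l$ are much smaller than the $M^{k-2}\expec[Y_l^2]$ used by Bernstein; the relevant mass sits on configurations coming from a \emph{single} light pair. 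Capturing this requires estimating $\expec[Y_l^k\indic_{\calE_l}]$ directly by a combinatorial count in the spirit of \cite{freige05}, where the truncation by $\calE$ is precisely what keeps these moments (and hence the exponential moment of the degree-two form) finite, and the hypothesis $n_2\gtrsim n_1\log^2n_1$ together with $\norm{w}_\infty\le\sqrt{n_2/n_1}p_{max}$ is what guarantees the resulting series still sums to an effectively sub-Gaussian bound up to the scale $\sqrt{n_1n_2}p_{max}$. Concretely, this is an instance of the general concentration inequality for $\sum_{ij}w_{ij}\langle A_{i:},A_{j:}\rangle$ (cf. Lemma~\ref{lem:conc_prod_bin_gen2}), specialized to the light-pair weights satisfying $\norm{w}\le1$ and $\norm{w}_\infty\le\sqrt{n_2/n_1}p_{max}$.
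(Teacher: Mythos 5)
Your column-wise factorization is valid as far as it goes ($S=\sum_l Y_l$ with $Y_l$ independent across $l$ and $\calE=\bigcap_l \calE_l$), but the reduction you build on it --- ``show $\sum_l\log\expec\bigl[e^{\theta Y_l}\indic_{\calE_l}\bigr]\lesssim\theta\expec S+\theta^2 v$ for $\theta\asymp t/v$'' --- is not merely delicate; it is quantitatively false, so the proof cannot be completed along this route. Take the threshold regime $p_{max}\asymp\sqrt{\log n_1/(n_1n_2)}$, $n_2\asymp n_1\log^2 n_1$, and $x$ uniform on a set of size $n_1/\sqrt{\log n_1}$; then every pair in the support sits exactly at the light-pair boundary, so $w_{ij}=\norm{w}_\infty$ there, and $\theta\norm{w}_\infty\asymp 1$. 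A column whose degree inside the support is $m\asymp C\sqrt{\log n_1}$ is permitted by $\calE_l$, has probability about $e^{-\Theta(\sqrt{\log n_1}\,\log\log n_1)}$, and produces $\theta Y_l\asymp C^2\log n_1$. Hence each factor obeys $\expec\bigl[e^{\theta(Y_l-\expec Y_l)}\indic_{\calE_l}\bigr]\geq 1+e^{C^2\log n_1-\Theta(\sqrt{\log n_1}\,\log\log n_1)}=n_1^{C^2(1-o(1))}$, i.e.\ each per-column log-mgf is of order $\log n_1$ rather than of the required order $\theta^2p_{max}^2=n_1/n_2=o(1)$. Multiplying over $n_2$ columns contributes $e^{\Theta(n_2\log n_1)}$, which swamps the gain $e^{-\theta t}\approx e^{-\Theta(n_1)}$: the Chernoff bound is vacuous (and no other choice of $\theta$ helps, since $\theta t\gtrsim n_1$ already forces $\theta\gtrsim t/v$). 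This is a compounding-of-rare-events phenomenon: moderate-degree columns contribute negligibly to $S$ itself, but they dominate every truncated per-column mgf, and taking a product over $n_2\gg n_1$ columns amplifies them fatally. The paper's proof is structured precisely to avoid this: it decouples over \emph{rows} (Hanson--Wright style, with Bernoulli signs $\delta_i$), conditions on the retained rows $(A_{i:})_{i\in\Lambda_\delta}$ so that $S_\delta$ becomes a \emph{linear} form in the remaining independent entries, and then the dangerous degree-profile quantities ($|L_2|$ and $\sum_{l\in L_2}(\sum_{i\in\Lambda_\delta}A_{il}x_i)^2$) are global, conditioning-measurable events that are excluded up front with probability $e^{-\Omega(n_1)}$ by Bernstein-type bounds --- they never enter an exponential moment, so they cannot compound.

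Two further problems. First, even on its own terms your write-up never carries out the combinatorial moment estimate that you yourself identify as the crux; it is only described and deferred. Second, the fallback citation is circular: Lemma~\ref{lem:conc_prod_bin_gen2} is stated only for indicator weights $w_{ij}=\indic_{i\in I}\indic_{j\in J}$, and the paper proves it by ``following the same calculation as in Lemma~\ref{lem:conc_prod_bin_gen}'' --- it is a byproduct of the techniques of the very lemma you are asked to prove, not a general weighted inequality you may invoke here.
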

}
\begin{proof}
We will use a similar decoupling approach as the one used in the proof of Hanson-Wright inequality, see \cite{HS}. Let $(\delta_i)_{i\in [n_1]}$ be independent Bernoulli's r.v. with parameter $1/2$ and let us define the set of indices
\[ \Lambda_\delta = \lbrace i\in [n_1]: \delta_i=1\rbrace\] and the random variable \[ S_\delta =\sum_{i,j}\delta_i(1-\delta_j)w_{ij}\langle A_{i:},A_{j:} \rangle= \sum_{i\in \Lambda_\delta} \langle A_{i:}, \sum_{j\in \Lambda_\delta^c}w_{ij}A_{j:}\rangle.\]
Note that $\expec _\delta S_\delta =S/4$.
 To simplify the notations we will denote by $\expec_{\Lambda^c}(.)$ (resp. $\expec_{\Lambda^c}(.)$) the expectation over $(A_{i:})_{i \in \Lambda^c_{\delta}}$ conditionally on $\delta$ and $(A_{i:})_{i \in \Lambda_{\delta}}$, $S_\delta$ (resp. the expectation over $(A_{i:})_{i \in \Lambda_{\delta}}$ conditionally on $\delta$ and $(A_{i:})_{i \in \Lambda^c_{\delta}}$, $S_\delta$). 

\paragraph*{Upper bound of the m.g.f. of $S_\delta$ conditionally on $\Lambda_\delta$.}

Conditionally on $\delta$ and $(A_{i:})_{i \in \Lambda_{\delta}}$, $S_\delta$ is a weighted sum of independent Bernoulli's r.v:
\[
S_\delta=\sum_{l\in [n_2]}\sum_{j\in \Lambda_\delta^c}A_{jl}\left(\sum_{i\in \Lambda_\delta}w_{ij}A_{il}\right).
\]
Hence, for all $t>0$ we have
\begin{align*}
    \log \expec_{\Lambda^c} &\left( e^{t(S_\delta-\expec_{\Lambda^c}(S_\delta))} \right) = \\
     &\log \expec_{\Lambda^c}\left( e^{tS_\delta}\right)-\sum_{i \in \Lambda_\delta}\sum_{j \in \Lambda_\delta^c}\sum_{l\in [n_2]}w_{ij}tA_{il}p_{jl}\\
    &=\sum_{j \in \Lambda_\delta^c}\sum_{l\in [n_2]} \left(\log (e^{t\sum_{i \in \Lambda_\delta}w_{ij}A_{il}}p_{jl}+1-p_{jl})-t\sum_{i \in \Lambda_\delta}w_{ij}A_{il}p_{jl}\right)\\
    &\leq \sum_{j \in \Lambda_\delta^c}\sum_{l\in [n_2]}\left(p_{jl}(e^{t\sum_{i \in \Lambda_\delta}w_{ij}A_{il}}-1)-t \sum_{i \in \Lambda_\delta}w_{ij}A_{il}p_{jl} \right)\tag{$\log(1+x)\geq x$, for all $x>-1$}\\
    &\leq  p_{max}t^2\sum_{j \in \Lambda_\delta^c}\sum_{l\in [n_2]}\frac{e^{t\norm{A_{:l}}_1\norm{w}_\infty}}{2}(\sum_{i \in \Lambda_\delta}A_{il}w_{ij})^2 \tag{by Taylor-Lagrange formula}\\
    .
\end{align*}
In order to upper-bound this m.g.f, it is necessary to control $\sum_{i\in [n_1]} A_{il}$ for each $l$. Since $\expec(\sum_{i\in [n_1]} A_{il})=O(n_1p_{max})=o(1)$ one can expect that these sums are generally of constant order. Unfortunately, this is not always the case and one needs to carefully control the number of indexes $l$ such that $\sum_{i\in [n_1]} A_{il}$ scales as $\sqrt{\log n_1}$. Toward this perspective, let us introduce the events  
 \[ L_1 =\left\lbrace l\in [n_2]: \sum_{i\in [n_1]} A_{il}\leq M\right\rbrace\] and 
 \[ L_2 =\left\lbrace l\in [n_2]: M< \sum_{i\in [n_1]} A_{il}\leq C\sqrt{\log n_1}\right\rbrace\] where $M>0$ is a constant that will be defined later. Note that onditionally on $\calE$, $[n_2]\subset L_1\cup L_2$. By using the fact that $w_{ij}=x_ix_j$ and $\sum_{j \in \Lambda_\delta^c}x_j^2\leq 1$ we obtain
 \[  \log \expec_{\Lambda^c} \left( e^{t(S_\delta-\expec_{\Lambda^c}(S_\delta))} \right) \leq  p_{max}t^2\frac{e^{tM\norm{w}_\infty}}{2}\sum_{l\in L_1}(\sum_{i \in \Lambda_\delta}A_{il}x_i)^2+ p_{max}t^2\frac{e^{tC\sqrt{\log n_1}\norm{w}_\infty}}{2}\sum_{l\in L_2}(\sum_{i \in \Lambda_\delta}A_{il}x_i)^2.\]

\paragraph{Control of the size of $L_2$.}
 Let \[ Y= \sum_l \indic_{\lbrace \sum_iA_{il} \geq M\rbrace}\] be the r.v. corresponding to the size of $L_1^c$. By Chernoff bound,
 \[ \prob\left( \sum_iA_{il} \geq M\right) \leq \prob\left( \sum_iA_{il} \geq M-4\right) \leq e^{-\frac{(M-4)^2}{2n_1p}} \leq e^{-C\sqrt{\log n_1}} \]
 (by choosing $M$ such that $(M-4)^2\geq 2C$) since $n_1p=\sqrt{\frac{n_1\log n_1}{n_2}}$ and $n_2\gtrsim n_1\log^2n_1$. Hence, by using Bernstein inequality, we obtain \[ \prob\left(Y -\expec(Y)\gtrsim n_2e^{-C\sqrt{\log n_1}}\right) \leq e^{-\Omega(n_2)}\] since $\sqrt{\log n_1}  \ll \log n_2 $. As a consequence the event
 \[ \calE' = \left\lbrace |L_1^c|\gtrsim n_2e^{-C\sqrt{\log n_1}}\right\rbrace \] 
 occurs with probability at most $e^{-\Omega(n_2)}$.

\paragraph{Control of the term $ \sum_{l\in [L_1]}(\sum_{i \in \Lambda_\delta}A_{il}x_{i})^2$.}
Let us define the event \[ \calE_1 = \left\lbrace \sum_{l\in L_1}(\sum_{i \in \Lambda_\delta}A_{il}x_{i})^2 \geq C_1 n_2p_{max}\right\rbrace\] for a constant $C_1>0$ large enough. Let us denote $Y_l=\sum_{i \in \Lambda_\delta}A_{il}x_{i}$. By assumption, $(Y_l^2\indic_{l \in L_1})_l$ are independent and $Y_l^2\indic_{l \in L_1}\leq M^2\sqrt{\frac{n_2}{n_1}}p_{max}$. Besides, we have $\expec(Y_l^2)\lesssim p_{max}$ and \begin{align}
    \expec(Y_l^4) &= \sum_{i_1, \ldots, i_4}\expec(A_{i_1l}A_{i_2l}A_{i_3l}A_{i_4l})x_{i_1}x_{i_2}x_{i_3}x_{i_4}\label{eq:a1}\\
    &\leq p\sum_i x_i^4 + 6p^2(\sum_ix_i^2)^2+4p^3\sum_ix_i\sum_ix_i^3+p^4(\sum_ix_i)^4\nonumber \\
    & \lesssim \sqrt{\frac{n_2}{n_1}}p_{max}^2+p_{max}^3\sqrt{n_1}+p_{max}^4n_1^2\nonumber\\
    &\lesssim \sqrt{\frac{n_2}{n_1}}p_{max}^2.\nonumber
\end{align}
By consequence, Berstein inequality implies that \begin{align*}
    \prob(\calE_1)&\leq \prob\left(\sum_{l\in [n_2]}\indic_{l\in L_1}(Y_l^2-\expec Y_l^2)\gtrsim n_2p_{max}\right)\\
    &\leq e^{-c\min\left(\frac{(n_2p_{max})^2}{n_2^{3/2}n_1^{-1/2}p_{max}^2},\frac{n_2p_{max}}{n_2^{1/2}n_1^{-1/2}p_{max}}\right)}\\
    &\leq e^{-c\sqrt{n_1n_2}}.
\end{align*}  

\paragraph{Control of the term $ \sum_{l\in L_2}(\sum_{i \in \Lambda_\delta}A_{il}x_{i})^2$.}
Let us define the event\[ \calE_1' = \left\lbrace \sum_{l\in [n_2]}Y_l^2\indic_{\lbrace l\in L_2\rbrace} \geq C_1 n_2e^{-C\sqrt{\log n_1}}p_{max}\right\rbrace.\] Note that \[ Y_l^2\indic_{\lbrace l\in L_2\rbrace}\leq C\sqrt{\log n_1}\sqrt{\frac{n_2}{n_1}}p_{max}\] for all distinct indices $i_1,i_2, i_3, i_4\in [n_1]$ we have 
\begin{align*}
     \expec\left(A_{i_1l}A_{i_2l}A_{i_3l}A_{i_4l}\indic_{\lbrace l\in L_2\rbrace}\right)&= \prob\left(\lbrace A_{i_1l}A_{i_2l}A_{i_3l}A_{i_4l}=1\rbrace \cap \lbrace \sum_{i\notin \lbrace i_1,\ldots, i_4\rbrace}A_{il}\geq M-4\rbrace\right)\\
     &\leq p_{max}^4e^{-C\sqrt{\log n_1}} \tag{by independence}.
 \end{align*}
 By consequence, using the same calculation as in \eqref{eq:a1}  we have $\expec(Y_l^2\indic_{\lbrace l\in L_2\rbrace})\lesssim p_{max}e^{-C\sqrt{\log n_1}}$ and $\expec(Y_l^4\indic_{\lbrace l\in L_2\rbrace})\lesssim \sqrt{\frac{n_2}{n_1}}p_{max}^2e^{-C\sqrt{\log n_1}}$. Bernstein's inequality implies that \[ \prob(\calE_1') \leq e^{-c\sqrt{n_1n_2}e^{-2C\sqrt{\log n_1}}}.\] But it is easy to check that, under the lemma assumptions, $n_1 \ll \sqrt{n_1n_2}e^{-2C\sqrt{\log n_1}}$.

\paragraph{Control of $\expec_{\Lambda^c}S_\delta - \expec_{A}S_\delta $.}Let us define the event  \[
\calE_2 =\left\lbrace |\expec_{\Lambda^c}S_\delta - \expec_{A}S_\delta | \geq C_2\sqrt{n_1n_2}p_{max}\right\rbrace.
\]
We have for all $t>0$
\begin{align*}
    \log \expec(e^{\expec_{\Lambda^c}S_\delta - \expec_{A}S_\delta}) & =\sum_{i,j.l}\log(e^{tw_{ij}p_{jl}}p_{il}+1-p_{il})-tw_{ij}p_{jl}p_{il}\\
    &\leq \frac{n_2p_{max}^3}{2}t^2e^{t\sqrt{\frac{n_2}{n_1}}p_{max}^2}
\end{align*}
By using Chernoff bound and the choice $t=\frac{1}{p_{max}^2}\sqrt{\frac{n_1}{n_2}}$ we obtain $\prob(\calE_2)\leq e^{-cn_1}$ for $C_2$ large enough.

\paragraph*{Conclusion.} Let us define 
\[ \calE(\delta) =\left\lbrace  \max_{l\in \Lambda_\delta} \sum_{i\in [n_1]} A_{il}\leq C \sqrt{\log n_1}\right\rbrace.\]
Note that for all $\delta $, $\calE\subset \calE(\delta)$ and $\calE(\delta)$ only depends on $\Lambda_\delta$.
For any fixed $\delta$, we have for $t=\sqrt{\frac{n_1}{n_2p^2_{max}}}$ and $C_2>0$ large enough \begin{align*}
    \prob &(\lbrace S_\delta -\expec_AS_\delta \geq 2C_2\sqrt{n_1n_2}p_{max}\rbrace \cap \calE)\\
    &= \expec(\prob(\lbrace S_\delta -\expec_AS_\delta \geq 2C_2\sqrt{n_1n_2}p_{max}\rbrace \cap \calE|\Lambda_\delta))\\
    &\leq  \expec(\indic_{\calE(\delta)}\prob(\lbrace S_\delta -\expec_AS_\delta \geq 2C_2\sqrt{n_1n_2}p_{max}\rbrace |\Lambda_\delta))\\
    &\leq \expec(\indic_{\calE(\delta)\cap\calE_1^c\cap\calE_1'^c\cap \calE_2^c}\prob(S_\delta -\expec_AS_\delta \geq 2C_2 \sqrt{n_1n_2}p_{max}|\Lambda_\delta))+3e^{-cn_1}\\
    & \leq  \expec(\indic_{\calE(\delta)\cap\calE_1^c\cap\calE_1'^c\cap \calE_2^c}\prob(S_\delta -\expec_{\Lambda^c}S_\delta \geq C_2\sqrt{n_1n_2}p_{max}|\Lambda_\delta))+3e^{-cn_1}\\
    &\leq \expec(\indic_{\calE_1^c}e^{p_{max}t^2\frac{e^{tM\sqrt{\frac{n_2}{n_1}}p_{max}}}{2}\sum_{l\in L_1}(\sum_{i \in \Lambda_\delta}A_{il}x_i)^2})e^{-tC_2\sqrt{n_2n_1}p_{max}}\\
    &+\expec(\indic_{\calE_1'^c}e^{p_{max}t^2\frac{e^{tC\sqrt{\log n_1}\sqrt{\frac{n_2}{n_1}}p_{max}}}{2}\sum_{l\in L_2}(\sum_{i \in \Lambda_\delta}A_{il}x_i)^2})e^{-tC_2\sqrt{n_2n_1}p_{max}}+3e^{-cn_1}\\
    &\leq 2e^{0.5n_2p_{max}^2t^2\frac{e^{t\sqrt{\frac{n_2}{n_1}}p_{max}}}{2}-tC_2\sqrt{n_2n_1}p_{max}}\\
    &\lesssim e^{-cn_1}.
\end{align*}
 By a union bound we have \[ \prob\left(\exists \delta, \calE \cap \lbrace S_\delta -\expec_AS_\delta \leq 2C_2\sqrt{n_1n_2}p_{max} \rbrace\right)\lesssim 2^{n_1}e^{-cn_1}\lesssim e^{-c'n_1} \] for a constant $c'>0$. It follows that, conditionned on $\calE$, with probability at least $1-e^{-c'n_1}$\[
S-\expec(S)=4(\expec_\delta(S_\delta)-\expec_\delta\expec_{A}S_\delta)\leq 8C_2\sqrt{n_1n_2}p_{max}.
\]
The stated result of the Lemma follows by symmetry of $S$ (the weights $w_{ij}$ can be negative). Note that the value of $c'$ depends only on the constants in the events we conditioned on. So, by choosing such constants large enough, we obtain $c'>11$.
\end{proof}

\begin{lemma}\label{lem:conc_prod_bin_gen2} Assume that the assumption of Theorem \ref{thm:improved_conc} are satisfied. Let, $I, J\subset [n_1]$ with $J\subset I$, $S=\sum_{i,j\in [n_1]}w_{ij}\langle A_{i:},A_{j:} \rangle$ where $w_{ii}=0$ for all $i$, $w_{ij}=\indic_{i\in I}\indic_{j\in J}$. Then for $C>0$ large enough we have
\[ \prob\left(\calE\cap \lbrace S\geq C|I||J|n_2p_{max}^2 \rbrace \right)\leq e^{-\frac{C}{2}n_2p_{max}^2C|I||J|}.\]
\end{lemma}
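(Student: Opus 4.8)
The plan is to exploit that $S$ is a sum of \emph{independent column contributions} and to reduce the statement to a one-sided multiplicative Chernoff (upper-tail) bound. Writing $\langle A_{i:},A_{j:}\rangle=\sum_{l\in[n_2]}A_{il}A_{jl}$ and interchanging the order of summation gives $S=\sum_{l\in[n_2]}X_l$ with $X_l=\sum_{i\in I,\,j\in J,\,i\neq j}A_{il}A_{jl}$. Since distinct columns of $A$ are independent, the $X_l$ are independent, and $\expec S=\sum_l\sum_{i\in I,j\in J,i\neq j}p_{il}p_{jl}\le|I||J|n_2p_{max}^2=:m$. Thus the claim is exactly a bound $\prob(\calE\cap\{S\ge Cm\})\le e^{-\Theta(C)m}$, i.e.\ that $S$ exceeds a large constant multiple of (an upper bound on) its mean with probability exponentially small in $m$. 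Writing $a_l=\sum_{i\in I}A_{il}$ and $b_l=\sum_{j\in J}A_{jl}$, each column term satisfies $X_l\le a_lb_l$, and on $\calE$ both $a_l\le C\sqrt{\log n_1}$ and $b_l\le C\sqrt{\log n_1}$.

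The most direct route is the Chernoff bound $\prob(\calE\cap\{S\ge Cm\})\le e^{-tCm}\prod_l\expec[e^{tX_l}\indic_{a_l\le C\sqrt{\log n_1}}]$, using column independence and $\indic_\calE\le\prod_l\indic_{a_l\le C\sqrt{\log n_1}}$ to factor the m.g.f. The obstacle is immediate: under $\calE$ the per-column increment is only bounded by $(C\sqrt{\log n_1})^2$, and a single uniform $t$ cannot both tame the rare columns of large degree and keep the exponent linear in $m$; any uniform $t$ is forced down to $t\lesssim 1/\sqrt{\log n_1}$ and loses a $\sqrt{\log n_1}$ factor. This is the same phenomenon handled in Lemma~\ref{lem:conc_prod_bin_gen}, and I would borrow its truncation idea: split the columns according to $a_l$ into a light family $L_1=\{l:a_l\le M\}$ ($M$ a large absolute constant) and a heavy family $L_2=\{l:M<a_l\le C\sqrt{\log n_1}\}$, and bound $S^{(1)}=\sum_{l\in L_1}X_l$ and $S^{(2)}=\sum_{l\in L_2}X_l$ separately.

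For the light part the increments are $X_l\le a_lb_l\le M^2=O(1)$, so the factored bound with a \emph{constant} $t$ (using $\expec[e^{tX_l}]\le\exp(\frac{e^{tM^2}-1}{M^2}\expec X_l)$ for bounded nonnegative variables) together with $\sum_l\expec X_l\le m$ yields $\prob(\calE\cap\{S^{(1)}\ge\tfrac C2 m\})\le e^{-\frac C2 m}$ once $C$ exceeds a constant depending only on $M$. For the heavy part I would use that high-degree columns are rare: since $\expec a_l\le|I|p_{max}\le n_1p_{max}=o(1)$ under the assumptions of Theorem~\ref{thm:improved_conc}, the probability that a column is heavy decays with $M$, so $\expec S^{(2)}\le\eta(M)m$ with $\eta(M)\to0$, and the second moment is sub-Poisson, $\expec X_l^2\lesssim\expec X_l$, hence $\sum_l\Var(X_l)\lesssim m$. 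I would then control $S^{(2)}$ by a Bernstein/Bennett estimate---or a dyadic decomposition of $(M,C\sqrt{\log n_1}]$ in which each scale uses its own optimised $t$---exploiting the rarity of heavy columns exactly as the events $\calE'$ and $\calE_1'$ are used in the proof of Lemma~\ref{lem:conc_prod_bin_gen}, to show that $\prob(\calE\cap\{S^{(2)}\ge\tfrac C2 m\})$ is negligible compared with $e^{-\frac C2 m}$.

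The main obstacle is precisely this heavy-column contribution: bounding $X_l$ crudely by $(C\sqrt{\log n_1})^2$ on $L_2$ reintroduces the $\log n_1$ loss, so the argument must genuinely use that a typical heavy column still contributes only $\approx a_l|J|p_{max}$ rather than its worst-case value. Two simplifications relative to Lemma~\ref{lem:conc_prod_bin_gen} help: the weights are nonnegative indicators, so only the upper tail is needed (no symmetrisation over signs), and the bilinear form already carries the bipartite ``column'' structure, so the full Hanson--Wright decoupling over $\delta\in\{0,1\}^{n_1}$ is unnecessary---this is fortunate, since the target exponent $m=|I||J|n_2p_{max}^2$ can be far smaller than $n_1$ and could not absorb a $2^{n_1}$ union bound. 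Finally, when $J\subseteq I$ the sums $a_l$ and $b_l$ share the coordinates in $J$; I would dispose of the resulting within-column quadratic by writing $a_l=b_l+c_l$ with $c_l=\sum_{i\in I\setminus J}A_{il}$ independent of $b_l$, reducing to the already-treated independent factors plus the term $\sum_l(b_l^2-b_l)$, which is handled by the same light/heavy split. Summing the light and heavy bounds over the split and applying Lemma~\ref{lem:bounded_degree}-style union bounds where needed then gives the stated estimate for all sufficiently large $C$.
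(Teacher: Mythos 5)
Your column-wise reduction is sound and is genuinely different from the paper's proof: the paper reruns the Hanson--Wright-type row decoupling of Lemma~\ref{lem:conc_prod_bin_gen} (random split $\Lambda_\delta$, conditional m.g.f.\ over the rows in $\Lambda_\delta^c$, then a union bound over $\delta$), whereas you exploit that $S=\sum_{l\in[n_2]}X_l$ is a sum of independent column contributions and needs no decoupling at all. Your observation that a $2^{n_1}$ union bound over $\delta$ cannot be absorbed when $m:=|I||J|n_2p_{max}^2\ll n_1$ is correct, and it in fact exposes a soft spot in the paper's own one-line conclusion, which inherits that union bound from Lemma~\ref{lem:conc_prod_bin_gen}. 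Your treatment of the light columns (increments $O(M^2)$, constant-$t$ Chernoff, exponent $\Theta(C)m$) works, as does the splitting $a_l=b_l+c_l$ to handle $J\subseteq I$.

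The genuine gap is the heavy-column part, which is exactly the step you leave as an assertion. Neither of the tools you name can deliver a bound negligible relative to $e^{-\frac{C}{2}m}$. On $\calE$ the heavy increments are only bounded by $C_0^2\log n_1$ (writing $C_0$ for the constant in the definition of $\calE$), so Bernstein's exponent is at most of order $m/\log n_1$: you lose precisely the $\log n_1$ factor the lemma is supposed to gain. Bennett improves this only by a factor $\log\bigl(1/\eta(M)\bigr)=\Theta(M\log\log n_1)$, because for constant $M$ the heavy-column rarity is $\eta(M)\asymp(n_1p_{max})^{M-2}$, which is only polylogarithmically small: in the boundary regime permitted by Theorem~\ref{thm:improved_conc}, namely $n_2\asymp n_1\log^2 n_1$ and $n_1n_2p_{max}^2\asymp\log n_1$, one has $n_1p_{max}\asymp 1/\sqrt{\log n_1}$. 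In that same regime a column of degree $s\asymp\sqrt{\log n_1}$ has probability about $(en_1p_{max}/s)^{s}=e^{-\Theta(\sqrt{\log n_1}\,\log\log n_1)}$, so even your dyadic multi-scale Chernoff is capped, at its top scale, at exponent $\Theta\bigl(m\log\log n_1/\sqrt{\log n_1}\bigr)\ll Cm$. This is not an artifact of the method but of the statement: take $I=J=[n_1]$, so $m=n_1\log n_1$ and $S=\sum_l(b_l^2-b_l)$; the event that at least $\lceil 8Cn_1/C_0^2\rceil$ columns have degree in $\bigl(\tfrac{C_0}{2}\sqrt{\log n_1},\,C_0\sqrt{\log n_1}\bigr]$ while all others stay below $C_0\sqrt{\log n_1}$ is contained in $\calE\cap\{S\geq Cm\}$, yet by independence of the columns its probability is $e^{-\Theta(Cn_1\sqrt{\log n_1}\,\log\log n_1)}$, which is far larger than $e^{-\frac{C}{2}m}=e^{-\frac{C}{2}n_1\log n_1}$. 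So the heavy part is where the claimed rate actually breaks, and no completion of your sketch can close it in the stated generality. (For what it is worth, the paper's own proof buries the same difficulty in the estimate $\prob(\sum_i A_{il}\geq M)\leq e^{-(M-4)^2/(2n_1p_{max})}\leq e^{-C\sqrt{\log n_1}}$ for constant $M$, a sub-Gaussian bound that is invalid for a Binomial this far above its mean; the correct Poisson-type tail is $e^{-\Theta(M\log\log n_1)}$.)
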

\begin{proof} 
Following the same calculation as in Lemma \ref{lem:conc_prod_bin_gen} one can show that 
\begin{align*}
    \log \expec_{\Lambda^c} &\left( e^{t(S_\delta-\expec_{\Lambda^c}(S_\delta))} \right) \leq \\
    &\leq  p_{max}t^2\sum_{j \in \Lambda_\delta^c\cap J}\sum_{l\in [n_2]}\frac{e^{t\norm{A_{:l}}_1}}{2}(\sum_{i \in \Lambda_\delta\cap I}A_{il})^2 \tag{by Taylor-Lagrange formula}\\
    &\leq  p_{max}t^2|J|\sum_{l\in [n_2]}\frac{e^{t\norm{A_{:l}}_1}}{2}(\sum_{i \in \Lambda_\delta\cap I}A_{il})^2\\
    &\leq p_{max}t^2|J|\sum_{l\in L_1}\frac{e^{tM}}{2}(\sum_{i \in \Lambda_\delta\cap I}A_{il})^2+p_{max}t^2|J|\sum_{l\in L_2}\frac{e^{tC\sqrt{\log n_1}}}{2}(\sum_{i \in \Lambda_\delta\cap I}A_{il})^2
    .
\end{align*}

We only highlight the main modifications since the proof is similar to Lemma \ref{lem:conc_prod_bin_gen}. 

\paragraph{Control of the term $ \sum_{l\in L_1}(\sum_{i \in \Lambda_\delta \cap I}A_{il})^2$.}
Let us define the event \[ \tilde{\calE}_1 = \left\lbrace \sum_{l\in [n_2]}\indic_{l\in L_1}(\sum_{i \in \Lambda_\delta \cap I}A_{il})^2 \geq C_1 n_2p_{max}\right\rbrace\] for a constant $C_1>0$ large enough. 

By the same calculation as in Lemma \ref{lem:conc_prod_bin_gen}, one can show that Bernstein inequality leads to \[ \prob\left( \sum_{l\in L_1}(\sum_{i \in \Lambda_\delta \cap I}A_{il})^2 \gtrsim n_2p_{max}|I|\right)\leq e^{-\Omega(n_2p_{max}|I|)}\leq e^{-\Omega(n_2p_{max}^2|I||J|)}.\]

\paragraph{Control of the term $ \sum_{l\in L_2}(\sum_{i \in \Lambda_\delta \cap I}A_{il})^2$.}

Let us define the event\[ \tilde{\calE}_1' = \left\lbrace \sum_{l\in [n_2]}Y_l^2\indic_{\lbrace l\in L_2\rbrace} \geq C_1 n_2e^{-C\sqrt{\log n_1}}p_{max}\right\rbrace.\]

By the same calculation as in Lemma \ref{lem:conc_prod_bin_gen}, one can show that Bernstein inequality leads to \[ \prob\left( \sum_{l\in L_2}(\sum_{i \in \Lambda_\delta \cap I}A_{il})^2 \gtrsim n_2p_{max}|I|e^{-C\sqrt{\log n_1}}\right)\leq e^{-\Omega(n_2p_{max}|I|)}\leq e^{-\Omega(n_2p_{max}^2|I||J|)}.\]

\paragraph{Conclusion.} We have shown that conditionally on $\calE $ \[ \log \expec_{\Lambda^c} \left( e^{t(S_\delta-\expec_{\Lambda^c}(S_\delta))} \right) \lesssim n_2p^2_{max}|I||J|t^2 e^{tM}+n_2p^2_{max}|I||J|t^2 e^{(t-c')C\sqrt{\log n_1}}.\] One can conclude by using Chernoff bound and choosing $t=1$.
\end{proof}


\end{document}